\documentclass{article}

\usepackage{ccanonne}


\title{Gaussian Mean Testing under Truncation}
\author{
Cl\'{e}ment L. Canonne\\
University of Sydney \\
\texttt{clement.canonne@sydney.edu.au} 
\and
Themis Gouleakis \\
Nanyang Technological University \\
\texttt{themis.gouleakis@ntu.edu.sg}
\and
Yuhao Wang\\
National University of Singapore\\
\texttt{yuhaowang@u.nus.edu}
\and
Joy Qiping Yang\\
University of Sydney \\
\texttt{qyan6238@uni.sydney.edu.au}
}
\date{}

\begin{document}
\maketitle
\begin{abstract}
  We consider the task of Gaussian mean testing, that is, of testing whether a high-dimensional vector perturbed by white noise has large magnitude, or is the zero vector. This question, originating from the signal processing community, has recently seen a surge of interest from the machine learning and theoretical computer science community, and is by now fairly well understood. What is much less understood, and the focus of our work, is how to perform this task under truncation: that is, when the observations (i.i.d.\ samples from the underlying high-dimensional Gaussian) are only observed when they fall in an given subset of the domain $\R^d$. This truncation model, previously studied in the context of learning (instead of testing) the mean vector, has a range of applications, in particular in Economics and Social Sciences. As our work shows, sample truncations affect the complexity of the testing task in a rather subtle and surprising way.
\end{abstract}

\section{Introduction}

The Gaussian mean testing problem, which originated in the context of signal processing under the name of signal detection, asks the following: given independent observations from a high-dimensional vector subject to random white noise, how to detect whether the underlying signal has large magnitude, or is non-significant? This can be seen as a hypothesis testing version of the so-called \emph{Gaussian location model} (GLM) question from information theory and signal processing, where the objective is to \emph{detect} a signal instead of \emph{learning} it.

Mean testing has recently seen a surge of interest from the machine learning and theoretical computer science (and, specifically, \emph{distribution testing}) communities, focusing on efficient algorithms with finite-sample guarantees, i.e., requiring as few observations (samples) as possible. This culminated in simple, sample-optimal algorithms for this task under an array of settings, including relaxing the assumption on the random noise~\cite{canonne2021random,diakonikolas2022gaussian}, considering it in the distributed, communication-limited setting~\cite{AcharyaCT20,SzaboVZ23}, or requiring robustness to adversarial corruptions of the observations~\cite{canonne2023full}.

In this work, we consider a different variant, and focus on the \emph{truncated samples} setting. Truncation happens when some observations fail to be observed or recorded, e.g., due to limitations in the sensing equipment or, in the case of social studies or surveys, when a subset of respondents systematically withhold their response. A typical example is when asking insurance customers for some sensitive medical information, as people with at-risk factors may decide to opt out of the survey entirely for fear of having their insurance premiums go up. Truncated samples (and the related notion of censored data) have a rich history in Statistics, and a host of applications in medical science, social studies, and Economics, to name a few (see, e.g.,~\cite{cohen1991truncated}); and, following~\cite{daskalakis2018efficient}, has recently been the focus of a line of work on efficient truncated statistics, whereby one seeks to develop efficient algorithms to efficiently estimate the parameters of a population given truncated samples: we elaborate on this in~\cref{sec:related}.

Despite the existence of these two lines of work~--~one on Gaussian mean testing, and the other on learning parameters from truncated samples, to the best of our knowledge there has not been any study of the very natural related question of \emph{Gaussian mean testing from truncated samples.} In this work, we address this question, and show that the complexity of the testing task changes drastically (and quite surprisingly) depending on the truncation set itself, and whether we have some \emph{a priori} information about it. In order to present our results and discuss their implications, we start by formally defining the problem:

\paragraph{Problem formulation.}
Let $\tmmathbf{\mu} \in \R^d, \tmmathbf{\Sigma}\in \R^{d\times d}$ be an unknown vector and covariance matrix, respectively, and $S \subseteq \R^d$, the \emph{truncation set}, be a subset of measure at least $1-\eps$ under the spherical normal distribution $\cN(\tmmathbf{\mu}, \tmmathbf{\Sigma})$, where $0 \leq \eps < 1$. We define the $S$-truncated Gaussian distribution, denoted $\cN(\tmmathbf{\mu}, \tmmathbf{\Sigma}, S)$, as the normal distribution $\cN(\tmmathbf{\mu}, \tmmathbf{\Sigma})$ conditioned on taking values on the subset $S$. We suppose that samples, $\mb{X} = \{\tmmathbf{x^{(1)}}, \dots, \tmmathbf{x^{(n)}}\}$, from an unknown $d$-variate normal $\cN(\tmmathbf{\mu}, \tmmathbf{\Sigma})$ are only revealed if they fall into some subset $S \in \R^d$; otherwise the
samples are hidden and their count in proportion to the revealed samples is also hidden.
We will make no assumptions about $S$, except that its measure $\eps$ with respect to the unknown distribution is non-trivial, say $\eps = 1\%$: that is, one should think of $\eps$ as a small (positive) constant. We will focus on the case of \emph{spherical} covariance matrices (before truncation), that is, where $\tmmathbf{\Sigma}=\tmmathbf{I_d}$: this corresponds to the signal detection problem alluded to before, where a signal is observed through random white noise.

Given $n$ i.i.d.\ samples $\tmmathbf{x}^{(1)}, \tmmathbf{x}^{(2)}, \dots$ from a truncated Gaussian distribution $P$ on $\R^d$ (with unknown vector $\tmmathbf{\mu}$ and truncation set $S$) and $\alpha \in (0,1]$ an accuracy, the task is to distinguish between the following cases: 
\begin{itemize}
   \item \textbf{(Completeness)} if $P = \cN(0,\tmmathbf{I_d}, S)$, the algorithm must output ``ACCEPT'' with probability at least $2/3$;
  \item \textbf{(Soundness)} if $P = \cN(\tmmathbf{\mu},\tmmathbf{I_d}, S)$ for some $\tmmathbf{\mu}$ with $\|\tmmathbf{\mu}\|_2 \geq \alpha$, the algorithm must output ``REJECT'' with probability at least $2/3$. 
\end{itemize}
The objective is to minimize the \emph{sample complexity} of the algorithm, i.e., the number of samples $n$ required to achieve the task, over all possible vectors $\tmmathbf{\mu}$ and truncation sets $S$.
Note that the complexity of the task might vary, depending on the parameter regime and the information available about $S$: namely, (1)~the relation between truncated mass $\eps$ and desired accuracy $\alpha$, and (2)~whether the set $S$ is unknown to the algorithm or known (either provided explicitly, or as a membership oracle.\footnote{A membership oracle for a set $S$ is a procedure which, on any input $x$, indicates whether $x\in S$.}

\subsection{Our contributions}
We establish upper and lower bounds on the sample complexity of the problem, and show it undergoes a stark transition as $\alpha$ and $\eps$ vary, when the truncation set is unknown to the algorithm. Specifically, we show the following, where, for ease of exposition, we focus on the dependence on the dimension $d$ and treat $\eps,\alpha$ as constants:
\begin{itemize}
   \item When $\eps \sqrt{\log 1/ \eps} \lesssim \alpha$, i.e., the accuracy parameter is significantly larger than the truncated probability mass, then the simple testing algorithm designed for the \emph{non-truncated} version of the problem works, achieving the optimal sample complexity $\Theta(\sqrt d)$ (\cref{mean_testing_small_truncation}).
   \item When $\eps \lesssim \alpha  \lesssim \eps \sqrt{\log 1/\eps}$, there is a sudden phase transition: we provide an information-theoretic lower bound showing that \emph{any} algorithm requires $\Omega(d)$ samples (\cref{lemma:lower_bound_truncation_near_accuracy}). Combined with an $O(d)$ upper bound obtained by \emph{learning} the unknown mean vector $\tmmathbf{\mu}$, our results show that in this regime \emph{testing suddenly becomes as hard as learning.}
    \item When $\alpha  \lesssim \eps$, it follows from \cite[Lemma 12]{daskalakis2018efficient} that the testing task becomes information-theoretically impossible, regardless of sample complexity.
\end{itemize}

\begin{table}[h]
    \centering
    \setlength{\tabcolsep}{16pt}
    \begin{tabular}{|c|c|c|c|}\hline
      & $\eps  \lesssim \frac{\alpha}{\sqrt{\log\frac{1}{\alpha}}}$ & $\frac{\alpha}{\sqrt{\log\frac{1}{\alpha}}}  \lesssim \eps  \lesssim \alpha$  & $\alpha  \lesssim \eps$\\\hline
     Unknown & \cellcolor{green!20}$\Theta{(\sqrt{d})}$ & \cellcolor{red!20} $\Theta{(d)}$ & \cellcolor{black!20}$\infty$\\\hline
     Known & \cellcolor{green!20}$\Theta{(\sqrt{d})}$ & \cellcolor{green!20}$\Theta{(\sqrt{d})}$  & \cellcolor{green!20}$\Theta{(\sqrt{d})}$\\\hline
    \end{tabular}
    \caption{Mean testing sample complexity for small enough constant $\eps$ and $\alpha$.}
    \label{tab:main_result_table}
\end{table}

In contrast, we show that when the truncation set is known, a different (yet still relatively simple) algorithm, based on the gradient of the maximum likelihood estimator, achieves the optimal sample complexity $O(\sqrt d)$, \emph{across all parameter ranges} (\cref{theorem:known_truncation_mean_testing}).

\subsection{Related Works}
\label{sec:related}
We here discuss the literature and previous related work.
\paragraph{Learning from Truncated or Censored Samples}
Distribution learning under censored, truncated mechanisms has had a long history. 
Censoring happens when the events can be detected, but the measurements (the values) are completely unknown, while truncation occurs when an object falling outside some subset are not observed, and their count in proportion to the observed samples is also not known, see \cite{deemer1955estimation, cohen1957solution, dixon1960simplified,  haas1990estimation,cohen1991truncated,barr1999mean, cha2013rethinking, charikar2017learning} for an overview of the related works in estimating the censored or truncated normal or other type of distributions. 
\cite{pearson1902systematic, pearson1908generalised,lee1914table} used the method of moments, while \cite{fisher1931properties} used the maximum likelihood approach for the distribution learning from truncated samples. Since then, 
\cite{daskalakis2018efficient, daskalakis2019computationally, daskalakis2020truncated} developed computationally and statistically efficient algorithms under the assumption that the truncation set is known. 
Furthermore, \cite{wu2019learning}
considered the problem of estimating the parameters of a $d$-dimensional rectified
Gaussian distribution from i.i.d. samples. This can be seen as a special case of the self-censoring truncation, where the truncation happens due to the ReLU generative model.

\paragraph{Testing if samples are truncated}
Orthogonally, \cite{DeNS23}, studied a different problem: Whether or not a set of i.i.d. samples from high-dimensional standard Gaussian has been previously truncated. They provide a positive answer for the setting in which the truncation is promised to be convex. In the follow-up work, \cite{DBLP:conf/stoc/De0NS24} studied the problem in a different setting: The distributions to test are from the class of Hypercontractive high-dimensional product distributions (includes standard Gaussian) and the truncation set can be characterized by a polynomial threshold function of degree $d$.

\paragraph{Robust mean estimation}Robust statistics~\cite{huber2011robust} considers statistical inference problems under the setting where samples observed could be contaminated in various ways.
For robust estimation, the usual goal is to obtain accurate estimation of parameters for parametric families such as Gaussian distributions under $\eps$-contamination, where $\eps$ is the maximum fraction of samples ($\eps \cdot n$ out of $n$) allowed to be contaminated. 
This problem has been extensively studied in recent years (see the book of \cite{diakonikolas2023algorithmic}, and references therein).
There are algorithms and lower bounds with different characteristics under different contamination models (time complexity and accuracy trade-off) \cite{DBLP:conf/colt/Hopkins019,DBLP:conf/colt/BlancLMT22,diakonikolas2019robust}.
\cite{hopkins2020robust} studied (nonparametric) robust mean estimation: distributions with finite covariance (see the survey by \cite{LugosiM19}, and references therein for more nonparametric works).
Notably, using algorithms developed through robust mean estimation (also called learning) for Gaussian under some strong contamination model, we can reduce our testing under truncation problem via the standard learning-to-test argument, which will give us a sample complexity upper bound of $\cO(d / \alpha^2)$.

\paragraph{Robust mean testing}
Gaussian mean testing has been studied and well known to have a sample complexity of $\Theta(\sqrt{d}/\alpha^2)$ \cite{diakonikolas2017statistical,diakonikolas2022gaussian}.
Recently, \cite{canonne2023full} studied the Gaussian mean testing problem under two contamination models: \emph{oblivious contamination model} and \emph{strong contamination model} -- both yield improved sample complexity than their learning counterparts. In the oblivious contamination model, an adversary could remove $\eps$ fraction of original samples from $P$ \emph{without} observing them and replacing them with samples from a different distribution. In this model, \cite{canonne2023full} prove a near-optimal sample complexity bound of $\tilde{\Theta} \left( \max \left( \frac{\sqrt{d}}{\alpha^2}, \frac{d
\varepsilon^3}{\alpha^4}, \min \left( \frac{d^{2 / 3} \varepsilon^{2 /
3}}{\alpha^{8 / 3}}, \frac{d \varepsilon}{\alpha^2} \right) \right) \right)$.

In the strong contamination model, where the adversary could first observe the values of original samples from $P$, then pick $\eps$ fraction of them and replace with arbitrary values, \cite{canonne2023full} give the near-optimal sample complexity bound of $\tilde{\Theta} \left( \max \left( \frac{\sqrt{d}}{\alpha^2}, \frac{d
\varepsilon^2}{\alpha^2} \right) \right)$\footnote{We use $\tilde{(\cdot)}$ to hide the polylogarithmic factors.}.

Indeed, truncation can be viewed as a special form of contamination model, and a strictly weaker form of contamination than the strong contamination model considered in 
\cite{canonne2023full}. Yet, it is somewhat orthogonal (neither stronger nor weaker) to the oblivious contamination model. We remark that our paper covers the full parameter regime in terms of the relation between $\eps$ and $\alpha$, while \cite[Theorem 7.1]{canonne2023full} has a limitation in the $\alpha \geq \eps \cdot \operatorname{polylog}(d, \frac{1}{\eps}, \frac{1}{\alpha})$. Under the regime, $\alpha \ll \eps \cdot \sqrt{\log \frac{1}{\eps}}$, there is separation in sample complexity: $\Theta(\frac{\sqrt{d}}{\alpha^2})$ v.s. $\tilde{\Theta} \left( \max \left( \frac{\sqrt{d}}{\alpha^2}, \frac{d
\varepsilon^2}{\alpha^2} \right) \right)$ between the truncation model and strong contamination model.

\section{Notation and Preliminaries}

\paragraph{Notation.} We denote the inner product of $\tmmathbf{x}, \tmmathbf{y} \in \R^d$ by $\langle \tmmathbf{x}, \tmmathbf{y}\rangle$. The identity matrix in $d$-dimensions is represented by $\mb{I}_d$. 
Let $\eps$ represent the mass of the truncation set, meaning that the mass of the observed part is $1 - \eps$, and let $\alpha$ denote the accuracy parameter. When there exists an absolute constant $c \in \mathbb{R}$ such that $A \leq c \cdot B$, we denote as $A \lesssim B$. Bold font is used to represent multivariate variables: e.g., $\tmmathbf{x,X}$.

The {\tmem{Mahalanobis distance}} between two vectors $x, y$ given
$\tmmathbf{\Sigma}$ is defined as,
\[ \|\tmmathbf{x}-\tmmathbf{y}\|_{\tmmathbf{\Sigma}} = \sqrt{(\tmmathbf{x}-\tmmathbf{y})^T \tmmathbf{\Sigma}^{- 1}
   (\tmmathbf{x}-\tmmathbf{y})} . \]

For a matrix $A \in \mathbb{R}^{m \times n}$ with entries $a_{i j}$, the
\emph{Frobenius norm} is defined as:
\[ \| A \|_F = \sqrt{\sum_{i = 1}^m \sum_{j = 1}^n | a_{i j} |^2} . \]
   
\paragraph{Truncated Gaussian Distribution.}
Let $\mathcal{N} (\tmmathbf{\mu}, \tmmathbf{\Sigma})$ represent the normal
distribution with mean $\tmmathbf{\mu}$ and covariance $\tmmathbf{\Sigma}$, whose probability density function is given by:
\[
    \mathcal{N} (\tmmathbf{\mu}, \tmmathbf{\Sigma}; \tmmathbf{x}) = \frac{1}{\sqrt{\det (2 \pi \tmmathbf{\Sigma})}} \exp \left( - \frac{1}{2}  \| \tmmathbf{x} -\tmmathbf{\mu}\|_{\tmmathbf{\Sigma}}^2 \right).
\] 
We denote the truncated normal distribution
restricted to a set $S$ as $\mathcal{N} (\tmmathbf{\mu}, \tmmathbf{\Sigma}, S)$, with the probability mass of $S$ under this distribution written as 
$\mathcal{N}(\tmmathbf{\mu}, \tmmathbf{\Sigma}, S)$. The corresponding probability density function is:
\[ \mathcal{N} (\tmmathbf{\mu}, \tmmathbf{\Sigma}, S ; \tmmathbf{x}) =
   \left\{\begin{array}{ll}
     \frac{1}{\mathcal{N} (\tmmathbf{\mu}, \tmmathbf{\Sigma}, S)} \cdot
     \mathcal{N} (\tmmathbf{\mu}, \tmmathbf{\Sigma}; \tmmathbf{x}) &
     \tmmathbf{x} \in S\\
     0 & \tmmathbf{x} \nin S
   \end{array}\right. . 
\]
We can then write the population negative log-likelihood $\bar{\ell}
(\cdot)$ for data coming from a truncated normal with mean $\tmmathbf{\mu}$ and covariance
matrix $\tmmathbf{I}_d$ as:
\begin{align}
  \bar{\ell} (\tmmathbf{v}) & =\mathbb{E}_{\tmmathbf{x} \sim \mathcal{N}
  (\tmmathbf{\mu}, \tmmathbf{I}_d, S)}  \left[ \frac{1}{2} \tmmathbf{x}^T
  \tmmathbf{x}-\tmmathbf{v}^T \tmmathbf{x} \right] \nonumber\\
  & \qquad + \log \left( \int_S \exp \left( - \frac{1}{2} \tmmathbf{z}^T
  \tmmathbf{z}+\tmmathbf{v}^T \tmmathbf{z} \right) d\tmmathbf{z} \right) .
  \label{eq:likelihood}
\end{align}

In this work, we focus on spherical Gaussian (covariance matrix is
$\tmmathbf{I}_d$), so our log-likelihood function only has one parameter. We can write the gradient of the negative log-likelihood function $\bar{\ell}$ as with respect to $\tmmathbf{v}$ ($\nabla \bar{l} (\tmmathbf{v})$) as follows:
\begin{align}
  \frac{\partial \bar{l} (\tmmathbf{v})}{\partial \tmmathbf{v}} 
   & = -\mathbb{E}_{\tmmathbf{x} \sim \mathcal{N} (\tmmathbf{\mu},
  \tmmathbf{I}_d, S)} [\tmmathbf{x}] +\mathbb{E}_{\tmmathbf{z} \sim
  \mathcal{N} (\tmmathbf{v}, \tmmathbf{I}_d, S)} [\tmmathbf{z}]
  \label{eq:gradient_of_negative_loglikelihood} 
\end{align}

Throughout this paper, we will use $S$ to indicate the support of $P$ after truncation and $\tmmathbf{\mu}_S = \mathbb{E}_{\tmmathbf{x} \sim \mathcal{N}(\tmmathbf{\mu}, \tmmathbf{I}_d, S)}[\tmmathbf{x}]$ the truncated mean of some multivariate normal $P$ (or the mean under truncation).

We will require the following two results from previous work:
\begin{lemma}[Strong convexity with truncation adapted {\cite[Lemma
4]{daskalakis2018efficient}}]
  \label{lemma:strong_convexity_negative_loglikelihood}
  Let $\tmmathbf{H}_{\ell}$ be the Hessian of the negative log likelihood
  function $\bar{\ell} (\tmmathbf{v})$, with the presence of arbitrary
  truncation $S$ such that $\mathcal{N} (\tmmathbf{\mu}, \tmmathbf{I}_d , S)
  \geqslant \beta$ for some $\beta \in (0, 1]$. Then it holds that
  \[ \tmmathbf{H}_{\ell} (\tmmathbf{v}) \succeq \frac{1}{2^{13}} \left(
     \frac{\beta}{C} \right)^4 \cdot \min \left\{ \frac{1}{4},
     \frac{1}{16\|\tmmathbf{\mu}\|_2^2 + 1} \right\} \cdot \tmmathbf{I}_d, \]
  where $C$ is a universal constant.
\end{lemma}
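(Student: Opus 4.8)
The plan is to first make the Hessian explicit and then reduce the claimed matrix inequality to a one-dimensional anti-concentration statement. In the gradient expression \eqref{eq:gradient_of_negative_loglikelihood} only the second term depends on $\tmmathbf{v}$, and a standard exponential-family computation — the log-partition function $\log\int_S \exp(-\tfrac12\tmmathbf{z}^T\tmmathbf{z}+\tmmathbf{v}^T\tmmathbf{z})\,d\tmmathbf{z}$ has Hessian equal to the covariance of its sufficient statistic $\tmmathbf{z}$ — shows that
\[ \tmmathbf{H}_{\ell}(\tmmathbf{v}) = \operatorname{Cov}_{\tmmathbf{z}\sim\cN(\tmmathbf{v},\tmmathbf{I}_d,S)}[\tmmathbf{z}]. \]
In particular $\tmmathbf{H}_{\ell}(\tmmathbf{v})\succeq 0$ automatically, and establishing the stated lower bound is equivalent to proving that for every unit vector $\tmmathbf{u}\in\R^d$,
\[ \operatorname{Var}_{\tmmathbf{z}\sim\cN(\tmmathbf{v},\tmmathbf{I}_d,S)}[\langle\tmmathbf{u},\tmmathbf{z}\rangle] \;\geq\; \frac{1}{2^{13}}\Bigl(\frac{\beta}{C}\Bigr)^4\min\Bigl\{\tfrac14,\tfrac{1}{16\|\tmmathbf{\mu}\|_2^2+1}\Bigr\}. \]
Thus it suffices to lower-bound the variance of an arbitrary one-dimensional projection of the truncated Gaussian.

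The core of the argument is an anti-concentration bound: a projection of a truncated Gaussian whose survival mass is not too small cannot be too concentrated. Writing $Y=\langle\tmmathbf{u},\tmmathbf{z}\rangle$, the density of $Y$ is obtained by integrating the Gaussian density over the slices $S\cap\{\langle\tmmathbf{u},\cdot\rangle=y\}$ and renormalizing by the mass $\cN(\tmmathbf{v},\tmmathbf{I}_d,S)$. Since each slice integral is at most the corresponding untruncated one-dimensional marginal density, the density $p_Y$ is bounded pointwise by $\frac{1}{\cN(\tmmathbf{v},\tmmathbf{I}_d,S)}\,\phi(y)$, where $\phi$ is the $\cN(\langle\tmmathbf{u},\tmmathbf{v}\rangle,1)$ density. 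A distribution with such a pointwise-bounded density cannot place most of its mass in a thin interval: combining Chebyshev's inequality (mass $\geq 1-1/t^2$ within $t$ standard deviations of the mean) with the observation that the bounded density integrates to at most $\frac{1}{\cN(\tmmathbf{v},\tmmathbf{I}_d,S)}\cdot\frac{2t\sigma}{\sqrt{2\pi}}$ over any interval of half-width $t\sigma$, and optimizing over $t$, yields a lower bound of the shape $\operatorname{Var}[Y]\gtrsim \cN(\tmmathbf{v},\tmmathbf{I}_d,S)^2$.

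The remaining, and main, difficulty is that this anti-concentration bound is governed by the survival mass $\cN(\tmmathbf{v},\tmmathbf{I}_d,S)$ at the point $\tmmathbf{v}$ where the Hessian is evaluated, whereas the hypothesis only controls the mass $\cN(\tmmathbf{\mu},\tmmathbf{I}_d,S)\geq\beta$ at the true mean. I would bridge the two by a change-of-measure estimate: the likelihood ratio between $\cN(\tmmathbf{v},\tmmathbf{I}_d)$ and $\cN(\tmmathbf{\mu},\tmmathbf{I}_d)$ is $\exp(\langle\tmmathbf{v}-\tmmathbf{\mu},\tmmathbf{z}\rangle-\tfrac12(\|\tmmathbf{v}\|_2^2-\|\tmmathbf{\mu}\|_2^2))$, so restricting $S$ to the sub-region where $\tmmathbf{z}$ is not atypically large (which still carries an $\Omega(\beta)$ fraction of the mass) transfers the $\beta$ lower bound to $\cN(\tmmathbf{v},\tmmathbf{I}_d,S)$, at a cost that grows with $\|\tmmathbf{v}-\tmmathbf{\mu}\|_2$. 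Carrying this out over the relevant bounded domain of $\tmmathbf{v}$ (with $\|\tmmathbf{v}\|_2$ comparable to $\|\tmmathbf{\mu}\|_2$) is exactly where both the extra powers of $\beta$ — upgrading the $\beta^2$ of the naive anti-concentration bound to the stated $\beta^4$ — and the factor $\min\{1/4,1/(16\|\tmmathbf{\mu}\|_2^2+1)\}$ arise: a larger $\|\tmmathbf{\mu}\|_2$ lets the Gaussian reweighting shift more mass, making the change of measure correspondingly more lossy. I expect the bookkeeping of these constants, rather than any conceptual step, to be the main obstacle, and would track them following \cite[Lemma 4]{daskalakis2018efficient} to land on the prefactor $\frac{1}{2^{13}}(\beta/C)^4$.
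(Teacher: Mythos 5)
First, a point of comparison: the paper does not actually prove \cref{lemma:strong_convexity_negative_loglikelihood}; it is imported, with adapted notation, from \cite[Lemma 4]{daskalakis2018efficient}, so the only meaningful benchmark is the proof in that reference. Your overall frame is the right one and matches the source's: the Hessian of the log-partition term is the covariance of the sufficient statistic, so $\tmmathbf{H}_{\ell}(\tmmathbf{v})=\operatorname{Cov}_{\tmmathbf{z}\sim\cN(\tmmathbf{v},\tmmathbf{I}_d,S)}[\tmmathbf{z}]$, the matrix inequality reduces to lower-bounding $\operatorname{Var}[\langle\tmmathbf{u},\tmmathbf{z}\rangle]$ for every unit $\tmmathbf{u}$, and the basic anti-concentration ingredient (projection density bounded pointwise by $\phi$ divided by the survival mass, hence variance at least a constant times the mass squared) is indeed the standard one.

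The gap is in your final bridging step, and it is not mere bookkeeping. If you first lower-bound the survival mass at $\tmmathbf{v}$ by a change of measure and only then invoke ``variance $\gtrsim$ mass$^2$'', the resulting bound decays like $\exp(-\Omega(\|\tmmathbf{v}-\tmmathbf{\mu}\|_2^2))$, which is exponentially weaker than the claimed polynomial factor $1/(16\|\tmmathbf{\mu}\|_2^2+1)$. Concretely, take $d=1$, $\mu=R$ large, $S=[R-r,R+r]$ with $r$ a constant chosen so that $\cN(\mu,1,S)=\beta$, and evaluate the Hessian at $v=0$. Then $\cN(0,1,S)=e^{-\Theta(R^2)}$, so your route yields a variance lower bound of $e^{-\Theta(R^2)}$, whereas the true variance of $\cN(0,1,S)$ is $\Theta(1/R^2)$ (the restriction of $e^{-x^2/2}$ to $[R-r,R+r]$ behaves like an exponential law of rate about $R$) --- which is exactly the behaviour the $1/(16\|\tmmathbf{\mu}\|_2^2+1)$ factor captures. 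So the argument cannot be routed through the mass of $S$ under $\cN(\tmmathbf{v},\tmmathbf{I}_d)$: one has to use that $\cN(\tmmathbf{v},\tmmathbf{I}_d,S)$ is an exponential tilt, proportional to $e^{\langle\tmmathbf{v}-\tmmathbf{\mu},\tmmathbf{z}\rangle}$, of the distribution $\cN(\tmmathbf{\mu},\tmmathbf{I}_d,S)$ whose mass genuinely is at least $\beta$, and quantify directly how much a linear tilt of magnitude $\|\tmmathbf{v}-\tmmathbf{\mu}\|_2$ can shrink the variance of a distribution with bounded projection density; that is where the $1/\|\tmmathbf{\mu}\|_2^2$ dependence comes from, it is the actual content of \cite[Lemma 4]{daskalakis2018efficient}, and it is missing from your sketch. (For the range in which this paper applies the lemma, namely $\|\tmmathbf{\mu}\|_2\leq 1/4$, your lossier route would still give a $\operatorname{poly}(\beta)$ bound, but it does not establish the lemma as stated.)
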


Let $\mathcal{D} = \{ P_v | v \in \mathbb{S}_d \}$ denote a family of distributions constructed in the following manner: Fix a one dimensional distribution $A$ and pick a unit $d$-dimensional vector $v \in \mathbb{S}_d$ uniformly at random. $P_v$ is a copy of $A$ in the direction of $v$ and standard normal in directions orthogonal to $v$.
\begin{proposition}[Sample complexity lower bound for high-dimensional testing {\cite[Proposition 7.1]{DBLP:journals/corr/DiakonikolasKS16c}}]
\label{prop:lbtesting}
Let $A$ be a distribution on $\R$ such that $A$ has mean 0 and $\chi^2(A, \mathcal{N}(0, 1))$ is finite. Then, there is no algorithm that, for any $d$, given $N < d/(8 \chi^2(A, \mathcal{N}(0, 1)))$ samples from a distribution $D$ over $\mathbb{R}^n$ which is either $\mathcal{N}(0, \tmmathbf{I}_d)$ or $P_v \in \mathcal{D}$, correctly distinguihes between the two cases with probability $2 / 3$.
\end{proposition}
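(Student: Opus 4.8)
The plan is to prove this information-theoretically by the standard reduction to a mixture-versus-simple-hypothesis distinguishing problem, controlled through the $\chi^2$-divergence (the Ingster--Suslina, or second-moment, method). Write $G = \mathcal{N}(0,\mathbf{I}_d)$ and set $\kappa := \chi^2(A,\mathcal{N}(0,1))$. The hard instance on the alternative side is the mixture $Q := \mathbb{E}_{v}[P_v^{\otimes N}]$, obtained by drawing $v\in\mathbb{S}_d$ uniformly and then $N$ i.i.d.\ samples from $P_v$; this is compared against the null $G^{\otimes N}$. Since the optimal advantage of any (even randomized) distinguisher equals $\mathrm{TV}(Q,G^{\otimes N})$, and Bayes-optimal success under a uniform prior is $(1+\mathrm{TV})/2$, a worst-case success probability of $2/3$ would force $\mathrm{TV}\ge 1/3$. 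It therefore suffices to show $\mathrm{TV}(Q,G^{\otimes N}) < 1/3$, for which I would use $\mathrm{TV}\le \tfrac12\sqrt{\chi^2(Q,G^{\otimes N})}$ and argue $\chi^2(Q,G^{\otimes N})\le 1/3$ whenever $N < d/(8\kappa)$.

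First I would reduce the $\chi^2$-divergence to a pairwise ``correlation'' between two independent hidden directions. Expanding $1+\chi^2(Q,G^{\otimes N}) = \int Q^2/G^{\otimes N}$ and using that the samples are i.i.d.\ yields the identity
\[ 1+\chi^2(Q,G^{\otimes N}) = \mathbb{E}_{v,v'}\big[\rho(v,v')^N\big], \qquad \rho(v,v') := \int_{\R^d}\frac{P_v(\mathbf{y})P_{v'}(\mathbf{y})}{G(\mathbf{y})}\,d\mathbf{y}, \]
with $v,v'$ independent and uniform on $\mathbb{S}_d$. The key structural fact is that, writing $a,\phi$ for the densities of $A$ and $\mathcal{N}(0,1)$, one has $P_v(\mathbf{y}) = \frac{a(\langle v,\mathbf{y}\rangle)}{\phi(\langle v,\mathbf{y}\rangle)}\,G(\mathbf{y})$ (the two densities factor identically in every orthonormal basis, differing only along the hidden direction), so that $\rho(v,v') = \mathbb{E}_{\mathbf{y}\sim G}\big[g(\langle v,\mathbf{y}\rangle)\,g(\langle v',\mathbf{y}\rangle)\big]$ with $g := a/\phi$.

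The heart of the argument, and the step I expect to be the main obstacle, is evaluating $\rho(v,v')$. Under $\mathbf{y}\sim G$ the pair $(\langle v,\mathbf{y}\rangle, \langle v',\mathbf{y}\rangle)$ is jointly Gaussian with unit variances and correlation $\theta := \langle v,v'\rangle$, so the Hermite expansion of $g$ together with Mehler's formula gives $\rho(v,v') = \sum_{k\ge 0} c_k^2\,\theta^{k}$, where $c_k$ is the $k$-th normalized Hermite coefficient of $g$. Three hypotheses enter here crucially: normalization of $A$ gives $c_0=1$; the assumption that $A$ has \emph{mean zero} gives $c_1 = \mathbb{E}_{A}[t] = 0$, killing the linear term; and finiteness of the $\chi^2$-divergence gives $\sum_{k\ge 1}c_k^2 = \kappa < \infty$. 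Since the lowest surviving term is quadratic and $|\theta|\le 1$, I can bound termwise $\sum_{k\ge 2} c_k^2\theta^k \le \theta^2\sum_{k\ge2}c_k^2 = \kappa\,\theta^2$, hence $\rho(v,v') \le 1 + \kappa\,\langle v,v'\rangle^2$. The vanishing of $c_1$ is precisely what upgrades the bound from linear to quadratic in $\theta$, and is thus responsible for the $\Omega(d)$ (rather than $\Omega(\sqrt d)$) threshold; getting the Hermite bookkeeping and the sum/expectation interchange right is the delicate part.

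Finally I would substitute into the correlation identity and use concentration of $\theta = \langle v,v'\rangle$ on the sphere. With $\mathbb{E}[\theta^{2j}] \le (2j-1)!!/d^{j}$, the bound $(2j-1)!!/j! \le 2^{j}$, and $\binom{N}{j}\le N^j/j!$, a binomial expansion of $(1+\kappa\theta^2)^N$ gives
\[ 1+\chi^2(Q,G^{\otimes N}) \le \mathbb{E}_{v,v'}\big[(1+\kappa\,\theta^2)^N\big] \le \sum_{j\ge 0}\Big(\tfrac{2N\kappa}{d}\Big)^{j} = \frac{1}{1-2N\kappa/d}, \]
valid whenever $2N\kappa/d < 1$. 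For $N < d/(8\kappa)$ this yields $\chi^2(Q,G^{\otimes N}) \le \frac{1/4}{3/4} = 1/3$, so $\mathrm{TV}(Q,G^{\otimes N}) \le \tfrac12\sqrt{1/3} < 1/3$, and the distinguishing task is information-theoretically impossible, as claimed.
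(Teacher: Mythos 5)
The paper does not actually prove this proposition: it is imported verbatim from \cite[Proposition 7.1]{DBLP:journals/corr/DiakonikolasKS16c}, so there is no in-paper argument to compare against, and your blind reconstruction follows essentially the same route as that cited source. Your argument is correct throughout: the Ingster--Suslina second-moment reduction to $1+\chi^2(Q,G^{\otimes N})=\mathbb{E}_{v,v'}[\rho(v,v')^N]$, the factorization $P_v(\mathbf{y})=\tfrac{a(\langle v,\mathbf{y}\rangle)}{\phi(\langle v,\mathbf{y}\rangle)}G(\mathbf{y})$, Mehler's formula with $c_0=1$, $c_1=\mathbb{E}_A[t]=0$ (the step that upgrades the correlation bound to $1+\kappa\langle v,v'\rangle^2$ and is responsible for the $\Omega(d)$ threshold), the spherical moment bound $\mathbb{E}[\langle v,v'\rangle^{2j}]\le (2j-1)!!/d^j$ together with $(2j-1)!!/j!\le 2^j$, and the final numerics ($2N\kappa/d<1/4$ giving $\chi^2\le 1/3$ and $\mathrm{TV}\le\tfrac{1}{2\sqrt{3}}<1/3$) all check out.
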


\section{Testing under Unknown Truncation}
\label{sec:test_with_unknown_truncation}

When the truncation set is unknown, we will focus on three possible regimes depending on the relation between the accuracy and truncation parameter:

\begin{itemize}
    \item {
        $\eps \cdot \sqrt{\log (1 / \eps) } \lesssim \alpha$: in this case, the truncation size is much smaller than the required accuracy, meaning the change in the empirical mean after truncation is negligible (at most $\eps \cdot \sqrt{\log (1 / \eps}$). Therefore, applying the standard mean tester \cite{diakonikolas2022gaussian} \cref{alg:gmttester} with a sample complexity of $\cO(\sqrt{d}/{\alpha^2})$ is sufficient.
    }
    \item {
        $\eps \lesssim \alpha \lesssim \eps \cdot \sqrt{\log (1 / \eps)}$: Here, the truncation size is close to the accuracy threshold. An adversarial truncation (knowing the true mean) can select a truncation set that shifts the truncated mean by at least $\Omega(\eps \cdot \sqrt{\log (1 / \eps)})$. In this regime, we establish a lower bound of $\Omega(d/\eps)$, indicating a transition in sample complexity from $\Theta(\sqrt{d})$ to $\Theta(d)$.
    }
    \item {
        $\alpha \lesssim \eps$: When the truncation size exceeds the accuracy threshold, it has been shown that testing becomes information theoretically unfeasible \cite[Lemma 12]{daskalakis2018efficient} to produce an estimate that is closer than a constant in total variation distance to the true distribution even for single-dimensional truncated Gaussians.
    }
\end{itemize}
Our contribution are in the first two regimes and we will elaborate on in the following subsections.

\subsection{When Truncation Size is Much Smaller Than Accuracy $\eps \sqrt{\log 1/ \eps} \lesssim \alpha$}

In this subsection, we present \cref{mean_testing_small_truncation}. Given that the change in the expectation after truncation is minimal, it is sufficient to bound the change in both the mean and variance of the truncated normal distribution (as outlined in \Cref{lm:truncatedvsnontruncated}). We then apply the tester and analysis from \cite[Theorem 1.1]{diakonikolas2022gaussian}.
As a result, it is sufficient to apply the standard mean tester in \cref{alg:gmttester} with a sample complexity of $\cO(\sqrt{d}/{\alpha^2})$.
\begin{restatable}{theorem}{smalltruncation}
\label{mean_testing_small_truncation}
    There exists an algorithm (\Cref{alg:gmttester}) that, given i.i.d. samples from truncated Gaussian distribution $P$ with an unknown support set $S \subset \mathbb{R}^d$, can distinguish the following two cases based on the truncation mass parameter $\eps \in (0, 1)$ and the accuracy parameter $\alpha > 0$:
    \begin{itemize}
      \item \tmtextbf{(Completeness)} If $P$ is a truncated Gaussian distribution
      $\cN (\tmmathbf{0}, \mb{I}_d, S)$ and the truncation mass satisfies $1 - \cN (0, \mb{I}_d , S) \leqslant
      \eps$, the algorithm will output ''ACCEPT'' with probability at least $2 / 3$.
      
      \item \tmtextbf{(Soundness)} If $P$ is a truncated Gaussian distribution $\cN
      (\tmmathbf{\mu}, \mb{I}_d, S)$ where $\| \mb{\mu} \|_2 \geqslant \alpha \geqslant
      c_1 \cdot \eps \sqrt{\log \frac{1}{\eps}}$ for some
      constant $c_1 > 0$ and the truncation mass satisfies $1 - \cN (0, \mb{I}_d , S) \leqslant \eps$, the algorithm will output ''REJECT'' with probability at least $2 / 3$.
    \end{itemize}
    The algorithm requires $\cO \left( \frac{\sqrt{d}}{\alpha^2} \right)$ samples from $P$.
\end{restatable}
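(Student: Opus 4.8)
The plan is to reduce truncated mean testing to ordinary Gaussian mean testing: in the regime $\eps\sqrt{\log(1/\eps)} \lesssim \alpha$, the truncated law $\cN(\tmmathbf{\mu}, \mb{I}_d, S)$ behaves, from the tester's point of view, like a (non-truncated) distribution whose first two moments are only slightly perturbed away from $(\tmmathbf{\mu}, \mb{I}_d)$, so that running the standard tester of \cite[Theorem 1.1]{diakonikolas2022gaussian} (\Cref{alg:gmttester}) with $\cO(\sqrt{d}/\alpha^2)$ samples still separates the completeness and soundness cases.

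First I would quantify the moment perturbation induced by truncating to a set $S$ of mass $1-\eps$, which is exactly the content of \Cref{lm:truncatedvsnontruncated}. The key estimates are $\|\tmmathbf{\mu}_S - \tmmathbf{\mu}\|_2 \lesssim \eps\sqrt{\log(1/\eps)}$ for the truncated mean and $\|\tmmathbf{\Sigma}_S - \mb{I}_d\|_{\mathrm{op}} \lesssim \eps\log(1/\eps)$ for the truncated covariance $\tmmathbf{\Sigma}_S$. The mean bound follows because the worst-case shift is realized by deleting an $\eps$-mass tail whose conditional mean lies at distance $O(\sqrt{\log(1/\eps)})$ from $\tmmathbf{\mu}$, weighted by the removed mass $\eps/(1-\eps)$; an analogous tail computation (removing mass where a coordinate is large) controls the second moments, the extra $\log(1/\eps)$ factor coming from the $t^2 \approx \log(1/\eps)$ scaling of the deleted variance.

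With these bounds I would translate the two hypotheses into a norm gap of order $\alpha$. In the completeness case $\tmmathbf{\mu}=\tmmathbf{0}$, so $\|\tmmathbf{\mu}_S\|_2 \lesssim \eps\sqrt{\log(1/\eps)}$, which is at most a small fraction of $\alpha$. In the soundness case $\|\tmmathbf{\mu}\|_2 \ge \alpha \ge c_1\,\eps\sqrt{\log(1/\eps)}$, the triangle inequality gives $\|\tmmathbf{\mu}_S\|_2 \ge \|\tmmathbf{\mu}\|_2 - \|\tmmathbf{\mu}_S-\tmmathbf{\mu}\|_2 \ge \alpha/2$ once $c_1$ is chosen large enough to absorb the implied constant from Step~1. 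Thus the truncated means witness an $\Omega(\alpha)$ gap, exactly the signal the standard tester is designed to detect.

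The remaining and main obstacle is to verify that the guarantee of \cite{diakonikolas2022gaussian}, stated for exact Gaussians, survives this perturbation; this requires revisiting its analysis rather than invoking it as a black box. The crucial point is that its quadratic test statistic has expectation and variance governed only by the sampling distribution's mean, covariance (through $\|\tmmathbf{\Sigma}_S\|_F$ and $\|\tmmathbf{\Sigma}_S\|_{\mathrm{op}}$), and low-order moments. Since truncating to a set of mass $1-\eps$ merely inflates the Gaussian density, and hence every tail probability and every moment, by the constant factor $1/(1-\eps) = O(1)$, the truncated law remains subgaussian with parameter $O(1)$; combined with $\|\tmmathbf{\Sigma}_S - \mb{I}_d\|_{\mathrm{op}} = O(\eps\log(1/\eps))$ this keeps $\|\tmmathbf{\Sigma}_S\|_F^2 = d\,(1+O(\eps\log(1/\eps)))$ within a constant factor of $d$. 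I would therefore argue that the statistic's expectation still reflects the $\Omega(\alpha)$ separation established in Step~2 while its variance stays within a constant factor of the Gaussian analysis, so the same $\cO(\sqrt{d}/\alpha^2)$ sample count yields the $2/3$ success probability in both cases. The delicate part is ensuring the covariance perturbation does not overwhelm the signal term and that the subgaussian (rather than exactly Gaussian) fourth moments still deliver the concentration the sample bound relies on; both cost only constant factors precisely because the analysis is mean-and-covariance driven.
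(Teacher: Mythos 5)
Your proposal is correct and follows essentially the same route as the paper: bound the truncated mean shift by $\cO(\eps\sqrt{\log(1/\eps)})$ and the covariance perturbation in operator norm by $\cO(\eps\log(1/\eps))$ (the paper's \Cref{lm:truncatedvsnontruncated}), deduce an $\Omega(\alpha)$ gap in $\|\tmmathbf{\mu}_S\|_2$ between the two cases, and rerun the mean/variance analysis of the statistic $Z=\langle\bar{\mb{X}},\bar{\mb{Y}}\rangle$ with $\|\tmmathbf{\Sigma}_S\|_F=\cO(\sqrt{d})$ followed by Chebyshev. The only difference is that the ``delicate part'' you flag at the end is a non-issue: because $Z$ is an inner product of two \emph{independent} empirical means, its second moment factorizes into products of second moments of $P$, so the variance bound of \Cref{lm:z} holds for an arbitrary distribution with no subgaussianity or fourth-moment control needed.
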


\begin{algorithm}[ht]  
\caption{GaussianMeanTester \cite{diakonikolas2022gaussian}} 
\label{alg:gmttester}
    \begin{algorithmic}[1]
      \Statex \textbf{Input}: 
      Sample access to distribution $P$ on $\R^d$ and $\alpha > 0$.
      \Statex \textbf{Output}: 
      ''ACCEPT'' if $P = \cN(\mathbf{0},\tmmathbf{I}_d,S)$, ``REJECT'' if $P = \cN(\tmmathbf{\mu},\tmmathbf{I}_d,S)$ and $\|\tmmathbf{\mu}\|_2 \geq \alpha$; both with probability at least $2/3$.
      \State Set $n = \cO ( \sqrt{d} / \alpha^2 )$.
      \State Sample $2n$ i.i.d.\ points from $p$ and denote them by $X_1,\dots,X_n$ and $Y_1,\dots,Y_n$.
      \State Define $Z = (1/n^2) (\sum_{i=1}^n X_i)^\top (\sum_{i=1}^n Y_i)$.
       \If{$ |Z| \leq O ( \sqrt{d}/n ) $}
           \State \Return ''ACCEPT''
       \Else \State \Return ''REJECT''
       \EndIf   
    \end{algorithmic}  
\end{algorithm} 
We now provide the proof sketch of \Cref{mean_testing_small_truncation}.
Given $2n$ i.i.d. samples from a $d$-variate truncated normal $P \sim \cN(\tmmathbf{\mu}, \mb{I}_d, S)$, let the sample set be $\{ \tmmathbf{x^{(1)}}, \dots,  \tmmathbf{x^{(n)} , \tmmathbf{y^{(1)}}, \dots, \tmmathbf{y^{(n)}}\}$, where 
$\mb{X} = \{x^{(1)}}, \dots, \tmmathbf{x^{(n)}}\}$, $\mb{Y} = \{\tmmathbf{y^{(1)}}, \dots, \tmmathbf{y^{(n)}}\}$.
The measure of $S$ under the non-truncated distribution $\cN(\tmmathbf{\mu}, \mb{I}_d)$ is at least $1- \eps$, where $0 \leq \eps < 1$. Define the empirical means of the sample sets in $\R^d$ as
\[
\bar{\mb{X}} \assign \frac{1}{n}  \sum_{i = 1}^n \mb{X}_i, \qquad \bar{\mb{Y}} \assign \frac{1}{n}  \sum_{i = 1}^n \mb{Y}_i.
\]
Our core test statistic is the inner product of these two empirical means: 
\begin{align}
\label{eq:z}
    Z = \langle \bar{\mb{X}}, \bar{\mb{Y}} \rangle 
\end{align}
Let $\tmmathbf{\mu}_S =\mathbb{E}_{\tmmathbf{x} \sim \mathcal{N}
(\tmmathbf{\mu}, \tmmathbf{I}_d, S)} [\tmmathbf{x}]$ denote the mean of the
truncated distribution, and let $\tmmathbf{\Sigma}_S =\mathbb{E}_{\tmmathbf{x} \sim
\mathcal{N} (\tmmathbf{\mu}, \tmmathbf{I}_d, S)}
[(\tmmathbf{x}-\tmmathbf{\mu}_S) \cdot (\tmmathbf{x}-\tmmathbf{\mu}_S)^T]$ be
the covariance matrix under truncation.

\begin{restatable}{lemma}{meanandvarianceofz}
\label{lm:z}
For the random variable $Z$ defined in \cref{eq:z}, obtained from two independent sets of $n$ samples (i.e. $2 n$ total samples) from $P$, the following holds: 
\begin{align}
    \mathbb{E}[{Z}] &= \langle \mathbb{E}[\bar{\mb{X}}], \mathbb{E}[{\bar{\mb{Y}}}] \rangle = \| \tmmathbf{\mu}_S \|_2^2 \label{eq:expz}\\
    \var[{Z}] & \leqslant \frac{\| \tmmathbf{\Sigma}_S \|_F^2}{n^2} + \frac{2}{n}  \| \tmmathbf{\Sigma}_S \|_F  \| \tmmathbf{\mu}_S
  \|_2^2 \label{eq:variancez}
\end{align}
\end{restatable}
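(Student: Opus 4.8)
The plan is to compute the mean and variance of $Z = \langle \bar{\mb{X}}, \bar{\mb{Y}} \rangle$ directly, exploiting the independence of the two sample sets $\mb{X}$ and $\mb{Y}$. For the expectation, since $\bar{\mb{X}}$ and $\bar{\mb{Y}}$ are independent and each is an average of $n$ i.i.d.\ copies of a sample drawn from $\cN(\tmmathbf{\mu}, \mb{I}_d, S)$, we have $\mathbb{E}[\bar{\mb{X}}] = \mathbb{E}[\bar{\mb{Y}}] = \tmmathbf{\mu}_S$. Then by independence the expectation of the inner product factors:
\[
\mathbb{E}[Z] = \mathbb{E}\big[\langle \bar{\mb{X}}, \bar{\mb{Y}} \rangle\big] = \big\langle \mathbb{E}[\bar{\mb{X}}], \mathbb{E}[\bar{\mb{Y}}] \big\rangle = \langle \tmmathbf{\mu}_S, \tmmathbf{\mu}_S \rangle = \|\tmmathbf{\mu}_S\|_2^2,
\]
which establishes \cref{eq:expz}. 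This part is routine and only uses linearity of expectation together with the factorization across the two independent halves.

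For the variance, I would condition on the relationship between the two halves. The cleanest route is to write $\var[Z] = \mathbb{E}[Z^2] - (\mathbb{E}[Z])^2$ and compute $\mathbb{E}[Z^2] = \mathbb{E}\big[\langle \bar{\mb{X}}, \bar{\mb{Y}} \rangle^2\big]$ by expanding the squared inner product as $\sum_{j,k} \bar{X}_j \bar{X}_k \bar{Y}_j \bar{Y}_k$ over coordinates $j,k$. Using independence of $\bar{\mb{X}}$ and $\bar{\mb{Y}}$, each term factors into $\mathbb{E}[\bar{X}_j \bar{X}_k]\,\mathbb{E}[\bar{Y}_j \bar{Y}_k]$. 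Writing $\mathbb{E}[\bar{X}_j \bar{X}_k] = (\tmmathbf{\mu}_S)_j (\tmmathbf{\mu}_S)_k + \tfrac{1}{n}(\tmmathbf{\Sigma}_S)_{jk}$ (since $\bar{\mb{X}}$ has mean $\tmmathbf{\mu}_S$ and covariance $\tfrac{1}{n}\tmmathbf{\Sigma}_S$), and likewise for $\bar{\mb{Y}}$, the product expands into four groups of terms. The leading term reassembles to $\|\tmmathbf{\mu}_S\|_2^4 = (\mathbb{E}[Z])^2$, which cancels; the remaining terms produce $\tfrac{2}{n}\tmmathbf{\mu}_S^\top \tmmathbf{\Sigma}_S \tmmathbf{\mu}_S$ (two symmetric cross terms) and $\tfrac{1}{n^2}\|\tmmathbf{\Sigma}_S\|_F^2$ (from the double-covariance term, since $\sum_{j,k}(\tmmathbf{\Sigma}_S)_{jk}^2 = \|\tmmathbf{\Sigma}_S\|_F^2$).

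The final step is to bound these exact expressions by the claimed upper bound in \cref{eq:variancez}. The $\tfrac{1}{n^2}\|\tmmathbf{\Sigma}_S\|_F^2$ term matches directly. For the cross term, I would bound $\tmmathbf{\mu}_S^\top \tmmathbf{\Sigma}_S \tmmathbf{\mu}_S \leq \|\tmmathbf{\Sigma}_S\|_{\mathrm{op}} \|\tmmathbf{\mu}_S\|_2^2 \leq \|\tmmathbf{\Sigma}_S\|_F \|\tmmathbf{\mu}_S\|_2^2$, using that the operator norm is dominated by the Frobenius norm, which yields the $\tfrac{2}{n}\|\tmmathbf{\Sigma}_S\|_F\|\tmmathbf{\mu}_S\|_2^2$ term.

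The main subtlety to verify carefully is the covariance of $\bar{\mb{X}}$: I must confirm that the covariance of $\bar{\mb{X}}$ equals $\tfrac{1}{n}\tmmathbf{\Sigma}_S$, which follows because the $n$ underlying samples are i.i.d.\ with per-sample covariance $\tmmathbf{\Sigma}_S$ (the truncated covariance), so averaging scales the covariance by $1/n$. The rest is careful bookkeeping in the index expansion — keeping track of which terms cancel against $(\mathbb{E}[Z])^2$ and correctly identifying the two symmetric cross terms — but there is no conceptual obstacle, since the truncated distribution enters only through its fixed mean $\tmmathbf{\mu}_S$ and covariance $\tmmathbf{\Sigma}_S$, and the entire computation is that of a second moment of an inner product of two independent sample averages.
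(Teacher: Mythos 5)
Your proposal is correct and follows essentially the same route as the paper: both compute $\mathbb{E}[Z^2]$ by expanding the inner product coordinate-wise, factor using the independence of the two halves, reduce each factor to $(\tmmathbf{\mu}_S)_j(\tmmathbf{\mu}_S)_k + \tfrac1n(\tmmathbf{\Sigma}_S)_{jk}$, cancel $\|\tmmathbf{\mu}_S\|_2^4$ against $(\mathbb{E}[Z])^2$, and bound the cross term $\tfrac2n\tmmathbf{\mu}_S^\top\tmmathbf{\Sigma}_S\tmmathbf{\mu}_S$ by $\tfrac2n\|\tmmathbf{\Sigma}_S\|_F\|\tmmathbf{\mu}_S\|_2^2$. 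The only cosmetic differences are that you work directly with the sample means rather than expanding over individual samples, and you pass through the operator norm rather than applying entrywise Cauchy--Schwarz; both yield the identical bound.
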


\begin{restatable}[Truncated vs non-truncated parameters]{lemma}{truncatedvsnontruncated}
\label{lm:truncatedvsnontruncated}
Let $\tmmathbf{\mu}_S, \tmmathbf{\Sigma}_S$ be the mean and covariance of the truncated
Gaussian $\cN (\tmmathbf{\mu}, \mb{I}_d, S)$ with a measure of at least $1 - \eps$. Then the following holds:
\[ \| \tmmathbf{\mu}_S - \tmmathbf{\mu} \|_2 \leq \mathcal{O} (\varepsilon \cdot
   \sqrt{\log (1 / \varepsilon)})  \text{ and } \| \tmmathbf{\Sigma}_S -\tmmathbf{I}_d\|_F
   \leqslant \cO (\sqrt{d}) . \]
\end{restatable}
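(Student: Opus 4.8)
The plan is to prove the two inequalities separately, and by translation invariance I may assume throughout that $\tmmathbf{\mu} = \tmmathbf{0}$: replacing $\tmmathbf{x}$ by $\tmmathbf{x}-\tmmathbf{\mu}$ turns $\cN(\tmmathbf{\mu}, \mb{I}_d, S)$ into $\cN(\tmmathbf{0}, \mb{I}_d, S-\tmmathbf{\mu})$, the shifted set $S-\tmmathbf{\mu}$ still has Gaussian mass at least $1-\eps$, the target $\tmmathbf{\mu}_S - \tmmathbf{\mu}$ becomes the mean of the centered truncated Gaussian, and $\tmmathbf{\Sigma}_S$ is unaffected. Write $\phi$ for the density of $\cN(\tmmathbf{0},\mb{I}_d)$ and set $\beta := \cN(\tmmathbf{0}, \mb{I}_d, S) \geq 1-\eps$ for the retained mass.

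For the mean shift, I would start from the identity $\int_{\R^d}\tmmathbf{x}\,\phi(\tmmathbf{x})\,d\tmmathbf{x} = \tmmathbf{0}$, which gives $\tmmathbf{\mu}_S = \frac{1}{\beta}\int_S \tmmathbf{x}\,\phi(\tmmathbf{x})\,d\tmmathbf{x} = -\frac{1}{\beta}\int_{S^c}\tmmathbf{x}\,\phi(\tmmathbf{x})\,d\tmmathbf{x}$, where the complement $S^c$ carries Gaussian mass at most $\eps$. Since $\|\tmmathbf{\mu}_S\|_2 = \sup_{\|u\|_2=1}\langle u, \tmmathbf{\mu}_S\rangle$, I would fix a unit vector $u$ and bound $\int_{S^c}|\langle u,\tmmathbf{x}\rangle|\,\phi(\tmmathbf{x})\,d\tmmathbf{x}$ by splitting at a threshold $R$: on the event $|\langle u,\tmmathbf{x}\rangle|\leq R$ the integrand is at most $R$ and the removed mass is at most $\eps$, contributing at most $R\eps$, while the complementary tail is at most $\mathbb{E}_{g\sim\cN(0,1)}[|g|\,\mathbf{1}\{|g|>R\}] = \frac{2}{\sqrt{2\pi}}e^{-R^2/2}$ (using that $\langle u, \tmmathbf{x}\rangle$ is a standard one-dimensional Gaussian under $\phi$). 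Taking $R = \sqrt{2\log(1/\eps)}$ makes the tail $\cO(\eps)$, so the bound is $\cO(\eps\sqrt{\log(1/\eps)})$ uniformly in $u$; dividing by $\beta = \Theta(1)$ yields the first claim.

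For the covariance shift, the key reduction is that since $\|\mb{I}_d\|_F = \sqrt{d}$ the statement only asks that $\tmmathbf{\Sigma}_S$ remain within constant spectral distance of the identity, so it suffices to bound the eigenvalues of $\tmmathbf{\Sigma}_S$. As $\tmmathbf{\Sigma}_S$ is the covariance taken about its own mean, for every unit $u$ one has $u^T\tmmathbf{\Sigma}_S u = \var_{\tmmathbf{x}}[\langle u,\tmmathbf{x}\rangle] \leq \mathbb{E}_{\tmmathbf{x}\sim\cN(\tmmathbf{0},\mb{I}_d,S)}[\langle u,\tmmathbf{x}\rangle^2] = \frac{1}{\beta}\int_S\langle u,\tmmathbf{x}\rangle^2\phi(\tmmathbf{x})\,d\tmmathbf{x} \leq \frac{1}{\beta}\int_{\R^d}\langle u,\tmmathbf{x}\rangle^2\phi(\tmmathbf{x})\,d\tmmathbf{x} = \frac{1}{\beta} \leq \frac{1}{1-\eps}$, where the first inequality drops the mean-subtraction in the variance, the second uses non-negativity of the integrand to enlarge the domain, and the last uses $\beta\geq 1-\eps$ together with the unit second moment of $\langle u,\tmmathbf{x}\rangle$. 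Hence $\tmmathbf{0}\preceq\tmmathbf{\Sigma}_S\preceq\frac{1}{1-\eps}\mb{I}_d$, so the eigenvalues $\lambda_i$ of the symmetric matrix $\tmmathbf{\Sigma}_S$ lie in $[0,\frac{1}{1-\eps}]$ and each $(\lambda_i-1)^2$ is $\cO(1)$ since $\eps$ is a small constant. As the squared Frobenius norm of a symmetric matrix equals the sum of its squared eigenvalues, $\|\tmmathbf{\Sigma}_S-\mb{I}_d\|_F^2 = \sum_{i=1}^d(\lambda_i-1)^2 \leq \cO(d)$, which gives the second claim.

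The two parts are of very different difficulty. The covariance bound is essentially free once one observes it requires only a constant-factor spectral bound, and the sole thing to verify is the elementary eigenvalue-to-Frobenius passage. The main (mild) obstacle is the $\sqrt{\log(1/\eps)}$ factor in the mean bound: it is produced entirely by the bulk term, and one must check that the threshold $R=\sqrt{2\log(1/\eps)}$ correctly balances the removed-mass contribution $R\eps$ against the Gaussian tail $\Theta(e^{-R^2/2})$. I would also note that this bound is not claimed to be tight — a sharper half-space extremal argument gives $\cO(\eps)$ — but the stated $\cO(\eps\sqrt{\log(1/\eps)})$ is all that is needed downstream in \cref{lm:z} and the tester of \cite{diakonikolas2022gaussian}.
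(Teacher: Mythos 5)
Your proof is correct and follows essentially the same route as the paper's: reduce both bounds to one-dimensional projections $\langle u,\tmmathbf{x}\rangle$, bound the mean shift by splitting the removed mass at the threshold $R=\sqrt{2\log(1/\eps)}$, and pass from a constant spectral bound on $\tmmathbf{\Sigma}_S-\tmmathbf{I}_d$ to the $\cO(\sqrt{d})$ Frobenius bound. If anything, your covariance argument (the two-sided sandwich $\tmmathbf{0}\preceq\tmmathbf{\Sigma}_S\preceq\frac{1}{1-\eps}\tmmathbf{I}_d$ obtained by dropping the indicator) is cleaner and more self-contained than the paper's sketch, which asserts a sharper $\cO(\eps\log(1/\eps))$ spectral bound it does not fully justify but also does not need.

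One correction to your closing aside: the claim that a half-space extremal argument improves the mean bound to $\cO(\eps)$ is false. Truncating the upper $\eps$-tail $\{x>t\}$ of a standard one-dimensional Gaussian, with $t=\Phi^{-1}(1-\eps)\approx\sqrt{2\log(1/\eps)}$, shifts the mean by $\phi(t)/(1-\eps)\approx \eps\,t = \Theta(\eps\sqrt{\log(1/\eps)})$, so the stated bound is in fact tight up to constants. This tightness is not incidental: it is exactly what drives the paper's lower bound in \cref{lemma:lower_bound_truncation_near_accuracy} and the phase transition at $\alpha\asymp\eps\sqrt{\log(1/\eps)}$.
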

Using \cref{lm:z} and \cref{lm:truncatedvsnontruncated}, we compute the expectation and variance of $Z$. In the completeness case, the quantity $|Z - \|\tmmathbf{\mu} \||_2^2$ is small, with $\mathbb{E}[Z] < \cO(\alpha^2)$ and $\var{[Z]} \lesssim \alpha^4 $. In the soundness case, We can lower bound the expectation of $\tmmathbf{\mu}_S$ for $\cN (\tmmathbf{\mu}, \tmmathbf{I_d}, S)$, where $\| \tmmathbf{\mu} \|_2 \geqslant \alpha$, and show that $\mathbb{E} [Z] \geqslant \Omega (\alpha^2)$, and $\var[Z] \lesssim \mathbb{E}^2 [Z]$. This provides a clear separation between the two cases.

\subsection{When Truncation Size is Near Accuracy $\eps \lesssim \alpha \lesssim \eps \sqrt{\log 1/ \eps}$ }
As the truncation mass $\eps$ approaches to $\alpha$, the null and alternative hypothesis may overlap due to the non-negligible truncation size. This overlap occurs because it becomes possible to choose truncation regions that can substantially alter $\tmmathbf{\mu}_S$ by an amount comparable to $\alpha$, rendering the standard algorithm ineffective. Surprisingly, it presents a much greater challenge for our testing problem, where the sample complexity escalates to $\Omega(d)$, matching that of the existing robust learning algorithms \cite[Proposition 1.20]{diakonikolas2023algorithmic}.
\begin{restatable}{theorem}{samplecomplexitygap}
  \label{theorem:sample_complexity}
  The sample complexity for truncated mean testing when $\varepsilon \lesssim
  \alpha \lesssim \varepsilon \cdot \sqrt{\log \frac{1}{\varepsilon}}$ is $\Theta(d)$.
\end{restatable}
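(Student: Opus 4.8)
We treat $\eps,\alpha$ as constants and prove matching bounds: $\cO(d)$ samples suffice and $\Omega(d)$ are necessary. \textbf{Upper bound (the routine direction).} The key observation is that an $S$-truncation of $\cN(\tmmathbf{\mu},\mb{I}_d)$ of truncation mass at most $\eps$ is within total variation distance $\eps$ of $\cN(\tmmathbf{\mu},\mb{I}_d)$ itself; hence the observed distribution is an $\eps$-corruption of a genuine spherical Gaussian whose mean is either $\tmmathbf{0}$ or has norm at least $\alpha$. Since truncation is a special case of the strong contamination model, I would run a robust Gaussian mean estimator and threshold the norm of the returned estimate, recovering $\tmmathbf{\mu}$ to accuracy $o(\alpha)$ with $\cO(d/\alpha^2)=\cO(d)$ samples for constant $\alpha$; I would double-check that the estimator's accuracy constant suffices throughout the window. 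The substance of the theorem is the lower bound, on which I focus.

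\textbf{Lower bound via \Cref{prop:lbtesting}.} The plan is to reduce to the high-dimensional testing lower bound by constructing a one-dimensional distribution $A$ that is (i) a truncation of a univariate Gaussian $\mathcal{N}(\mu_1,1)$ with $|\mu_1|\ge\alpha$ removing at most $\eps$ mass, (ii) of mean exactly $0$, and (iii) with $\chi^2(A,\mathcal{N}(0,1))=\cO(\eps)$. Given such an $A$, \Cref{prop:lbtesting} gives that distinguishing $\cN(\tmmathbf{0},\mb{I}_d)$ from the family $\mathcal{D}=\{P_v\}$ needs $\Omega(d/\chi^2(A,\mathcal{N}(0,1)))=\Omega(d/\eps)=\Omega(d)$ samples. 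It remains to verify that these are legal instances of our problem: the null $\cN(\tmmathbf{0},\mb{I}_d)$ is a valid completeness instance (take $S=\R^d$), while for $P_v$ I set $\tmmathbf{\mu}=\mu_1 v$ and take the halfspace $S=\{\tmmathbf{x}\in\R^d:\langle\tmmathbf{x},v\rangle\le t\}$, where $t$ is the truncation threshold defining $A$. Truncating along the single direction $v$ leaves the orthogonal coordinates independent and standard normal, so $\cN(\tmmathbf{\mu},\mb{I}_d,S)$ factorizes as $A$ along $v$ times a standard Gaussian on $v^\perp$, i.e.\ exactly $P_v$, with truncation mass equal to the one-dimensional removed mass $\le\eps$ and $\|\tmmathbf{\mu}\|_2=|\mu_1|\ge\alpha$. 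Because $A$ has mean $0$, every alternative has truncated mean $\tmmathbf{\mu}_S=\tmmathbf{0}$, matching the null, so no first-moment statistic can distinguish them and the $\Omega(d)$ barrier is precisely the high-dimensional obstruction captured by \Cref{prop:lbtesting}.

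\textbf{Constructing $A$.} I would take $\mathcal{N}(\alpha,1)$ and truncate its right tail, $A=\mathcal{N}(\alpha,1,(-\infty,t])$, choosing $t$ so that $\mathbb{E}_A[x]=0$; since the truncated mean equals $\alpha-\phi(t-\alpha)/\Phi(t-\alpha)$ (with $\phi,\Phi$ the standard normal density and CDF), this amounts to solving $\phi(t-\alpha)/\Phi(t-\alpha)=\alpha$, which has a unique solution by monotonicity, yielding (ii). The removed mass is $\delta=1-\Phi(t-\alpha)\approx\alpha/\sqrt{2\log(1/\alpha)}$, so the constraint $\delta\le\eps$ reads $\alpha\lesssim\eps\sqrt{\log(1/\eps)}$ --- exactly our regime --- giving (i). For (iii), using $\mathcal{N}(\alpha,1;x)^2/\mathcal{N}(0,1;x)=e^{\alpha^2}\,\mathcal{N}(2\alpha,1;x)$,
\[
  \chi^2(A,\mathcal{N}(0,1))=\frac{1}{(1-\delta)^2}\int_{-\infty}^{t}\frac{\mathcal{N}(\alpha,1;x)^2}{\mathcal{N}(0,1;x)}\,dx-1\;\le\;\frac{e^{\alpha^2}}{(1-\delta)^2}-1,
\]
and since $\delta\le\eps$ and $\alpha^2\lesssim\eps^2\log(1/\eps)=o(\eps)$, the right-hand side is $\cO(\eps)$, establishing (iii).

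\textbf{Main obstacle.} The delicate part is the simultaneous control in the construction of $A$: a one-sided tail truncation of mass at most $\eps$ must both cancel a mean of size $\ge\alpha$ (forcing $\alpha\lesssim\eps\sqrt{\log(1/\eps)}$, the maximal achievable shift) and keep $\chi^2(A,\mathcal{N}(0,1))$ as small as $\cO(\eps)$, so that \Cref{prop:lbtesting} yields the full $\Omega(d/\eps)=\Omega(d)$ bound rather than a weaker one. Pinning down the tail-truncation mean shift and the induced $\chi^2$ at once, and matching them to the two regime boundaries, is where the care is needed; the reduction itself and the invocation of \Cref{prop:lbtesting} are then immediate.
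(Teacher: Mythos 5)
Your proposal is correct and follows essentially the same route as the paper: the upper bound is the same learning-to-test reduction via robust mean estimation, and the lower bound is the same construction of a mean-zero, right-tail-truncated univariate Gaussian $A$ embedded along a random direction, with $\chi^2(A,\mathcal{N}(0,1))\le e^{\alpha^2}/(1-\delta)^2-1=\cO(\eps)$ fed into \cref{prop:lbtesting}. The only cosmetic difference is the parametrization (you fix $\alpha$ and solve for the threshold $t$, whereas the paper fixes the truncated mass at $\eps$ and solves for $\alpha=\Theta(\eps\sqrt{\log(1/\eps)})$), and your explicit verification that the halfspace truncation realizes $P_v$ is a point the paper leaves implicit.
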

\paragraph{Mean Testing Lower Bound}
We now show the main idea of our lower bound proof.
Intuitively, the hard instance constructed in \Cref{lemma:lower_bound_truncation_near_accuracy} does exactly this: it modifies the mean by $\alpha$ and selects a random unit vector $v$ to define its direction in $\mathbb{R}^d$, thereby forming a $d$-variate truncated normal distribution in the soundness case. This family of hard instances will be difficult to distinguish from $\mathcal{N}(0, \tmmathbf{I}_d)$, the standard multivariate normal distribution without truncation. We can establish a $\Omega(d)$ sample complexity bound using lower bound machinery developed in \cite[Proposition 7.1]{DBLP:journals/corr/DiakonikolasKS16c}. This indicates that any tester will require a sufficient number of samples to estimate the hidden direction $v$ before being able to differentiate between the null and alternative hypothesis.

\begin{restatable}[Lower Bound for Mean Testing with Unknown Truncation When \( \eps \lesssim \alpha \lesssim \eps \sqrt{\log (1/\eps)} \)]{lemma}{lbunknowntruncation}
\label{lemma:lower_bound_truncation_near_accuracy}
    No algorithm can distinguish between \(\mathcal{N}(\mathbf{0}, \mathbf{I}_d)\) and a family of truncated normal distribution of the form: \(\mathcal{N}(\tmmathbf{v}, \mathbf{I}_d, S)\) with measure \(\eps\) on the truncation set \(\bar{S} = \mathbb{R}^d \backslash S\), for any \(\eps < 1\) and some \(\| \tmmathbf{v} \|_2 = \alpha = \Theta (\eps \sqrt{\log (1/\eps)})\), using fewer than \(\Omega\left(d/\eps \right)\) samples with a probability greater than \(2/3\).
\end{restatable}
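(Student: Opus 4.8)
The plan is to reduce the testing problem to the abstract lower-bound framework of \cref{prop:lbtesting}. That proposition says: if $A$ is a one-dimensional distribution with mean $0$ and finite $\chi^2(A,\cN(0,1))$, then distinguishing $\cN(\mathbf{0},\mathbf{I}_d)$ from the family $\{P_v : v \in \mathbb{S}_d\}$ (a copy of $A$ along a random direction $v$, standard normal orthogonally) requires $N \geq d/(8\chi^2(A,\cN(0,1)))$ samples. So the whole game is to design a suitable one-dimensional distribution $A$ that (i) is itself a truncated Gaussian with the right mass parameter, (ii) has mean exactly $0$ so it fits the hypothesis of \cref{prop:lbtesting}, and (iii) has $\chi^2$-divergence from $\cN(0,1)$ of order $\eps$, which is what will convert the generic bound $d/\chi^2$ into the claimed $\Omega(d/\eps)$.

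\medskip

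\noindent\textbf{Construction of the one-dimensional instance.} The target soundness distribution is $\cN(\tmmathbf{v},\mathbf{I}_d,S)$ with $\|\tmmathbf{v}\|_2 = \alpha = \Theta(\eps\sqrt{\log(1/\eps)})$ and the truncation set $\bar S = \R^d\setminus S$ carrying mass $\eps$. First I would pick the truncation to act only along the signal direction $v$: remove a one-dimensional tail region $T\subset\R$ of Gaussian mass $\eps$ (e.g.\ the half-line $(-\infty,t]$ where $\Phi(t)=\eps$, so $t\approx-\sqrt{2\log(1/\eps)}$), and set $\bar S = \{x : \langle x, v\rangle \in T\}$. The key calculation is to choose the pre-truncation mean shift $m$ along $v$ so that truncating $\cN(m,1)$ to $\R\setminus T$ produces a distribution $A$ whose mean is \emph{exactly} $0$: removing a left tail pulls the conditional mean up by $\Theta(\eps\sqrt{\log(1/\eps)})$, so taking $m=-\Theta(\eps\sqrt{\log(1/\eps)})=-\alpha$ rebalances it to mean zero. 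The resulting $P_v$ is then a genuine $d$-dimensional truncated Gaussian with truncated mass $\eps$ and true (untruncated) mean of norm $\alpha$, so it is a legitimate soundness instance, while its projection $A$ has mean $0$ as required. This is exactly the intuition in the paragraph preceding the statement: the truncation is chosen adversarially to shift the truncated mean back toward zero, making the instance look like pure noise.

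\medskip

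\noindent\textbf{Bounding the $\chi^2$-divergence.} The crux is to show $\chi^2(A,\cN(0,1)) = O(\eps)$, which yields the $\Omega(d/\eps)$ bound. Intuitively $A$ and $\cN(0,1)$ agree except on a set of mass $\eps$ (the tail $T$ is deleted and the remaining mass is rescaled by $1/(1-\eps)$, plus a tiny $O(\alpha)$ mean shift), so the densities differ by a factor $1+O(\eps)$ on the bulk and $A$ vanishes on $T$. Writing $\chi^2(A,\cN(0,1))=\int (dA/d\cN)^2\,d\cN - 1$ and using that the likelihood ratio is bounded by $1+O(\eps)$ on the support of $A$ while being $0$ on $T$, one gets $\chi^2 = O(\eps)$; the contribution of the small mean shift $\alpha=\Theta(\eps\sqrt{\log(1/\eps)})$ must be checked to be dominated, but since $\chi^2$ between two unit-variance Gaussians with mean gap $\alpha$ is $e^{\alpha^2}-1=O(\alpha^2)=O(\eps^2\log(1/\eps))=o(\eps)$, it is indeed lower-order. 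Substituting into \cref{prop:lbtesting} gives the sample complexity lower bound $\Omega(d/\eps)$.

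\medskip

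\noindent\textbf{The main obstacle} I anticipate is the $\chi^2$ bound, and in particular pinning down the constants so that the mean-zero calibration and the $O(\eps)$ divergence hold \emph{simultaneously} for the same $m$. The two requirements are coupled: the amount of mass removed ($\eps$) controls both the mean correction needed and the divergence, so I would need a careful one-dimensional computation of the conditional mean $\E[X\mid X\notin T]$ and of $\int_{\R\setminus T}\cN(m,1;x)^2/\cN(0,1;x)\,dx$ as functions of $\eps$ (via Gaussian tail asymptotics $\Phi(t)\sim \phi(t)/|t|$), verifying that the left-tail deletion of mass $\eps$ shifts the mean by precisely order $\eps\sqrt{\log(1/\eps)}=\Theta(\alpha)$ and that no cancellation inflates the $\chi^2$ beyond $O(\eps)$. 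A secondary check is that the $d$-dimensional $P_v$ genuinely matches the form in \cref{prop:lbtesting} — i.e.\ that restricting truncation to the $v$-direction leaves the orthogonal directions exactly $\cN(0,1)$ — which follows because the spherical Gaussian factorizes and the truncation set depends only on $\langle x,v\rangle$.
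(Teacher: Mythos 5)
Your proposal is correct and follows essentially the same route as the paper: truncate a one-dimensional Gaussian tail of mass $\eps$, calibrate a pre-truncation mean shift of magnitude $\Theta(\eps\sqrt{\log(1/\eps)})$ so the truncated marginal has mean exactly zero, bound $\chi^2(A,\cN(0,1))$ by $O(\eps)+O(\alpha^2)=O(\eps)$, and invoke \cref{prop:lbtesting}; your left-tail/negative-mean convention is just the mirror image of the paper's right-tail/positive-mean construction. The only point to tighten is your pointwise claim that the likelihood ratio is $1+O(\eps)$ on the bulk (it is not, because of the $e^{mx}$ factor), but the integral bound you ultimately invoke is exactly the paper's computation and goes through.
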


The complete proof is provided in \cref{appendix:sec:chi_square_upper_bound}. Below, we present a sketch of the proof for \cref{lemma:lower_bound_truncation_near_accuracy}.
We begin by constructing a one-dimensional truncated normal distribution $A = \mathcal{N} (\alpha, 1, S)$, where the truncated mass is $\eps$. This means 
$\Pr_{x \sim \mathcal{N} (\alpha, 1)} [x \in S] = 1 - \eps$. We can determine the $1 - \eps$ quantile as:
 \[ 
     b = \alpha + \sqrt{2} \tmop{erf}^{- 1} (1 - 2 \eps) . 
 \]
which defines the truncation set as 
$S = (- \infty, b]$.

Let $\alpha (\eps) = \alpha = \Theta \left( \eps \sqrt{\log \frac{1}{\eps}} \right)$. For any $\eps$, we can find a constant $c_2 = \Theta(1)$ such that $\mathbb{E} [A] = 0$:
\[
    \mathbb{E}_{X \sim A} [X] 
    =
    \alpha - \frac{\exp \left( - \frac{1}{2} 
    \left( b - \alpha \right)^2 \right)}{\sqrt{2 \pi}  (1 -
    \varepsilon)} 
    = 0, 
\]
which is equivalent to:
\[ 
    \frac{\exp (- (\tmop{erf}^{- 1} (1 - 2 \eps))^2)}{\sqrt{2 \pi}  (1 - \eps)} 
    =
    \Theta \left( \eps \sqrt{\log \frac{1}{\eps}} \right) 
    = \alpha. 
\]
Next, we compute an upper bound on the chi-squared divergence between the truncated distribution $A$ and the standard normal distribution $\mathcal{N}(0, 1)$. We find that 
\[
 \chi^2 (A, \mathcal{N} (0, 1)) \leq \cO(\eps + \alpha^2),
\]
We now apply \cref{prop:lbtesting} \cite[Proposition 7.1]{DBLP:journals/corr/DiakonikolasKS16c}, and obtain a lower bound of
\[ 
    \Omega \left( \frac{d}{\eps + \alpha^2} \right) 
    =
    \Omega \left(\frac{d}{\eps} \right). 
\]

\paragraph{Mean Testing Upper Bound}
We apply the standard learning-to-test approach: first we estimate the pre-truncation mean of the truncated normal using $\cO(d / \alpha^2)$ samples, following \cite[Proposition 1.20]{diakonikolas2023algorithmic}. This gives an estimate $\hat{\mathbf{\mu}}$ that is within $\alpha$ of the true mean before truncation. If $\hat{\mathbf{\mu}}$ is sufficiently close to zero, we return "ACCEPT". Otherwise, return "REJECT".

\section{Testing under known truncation}
In this section, we demonstrate in \cref{theorem:known_truncation_mean_testing} that when the truncation set is known, an alternative yet straightforward algorithm, which leverages the gradient of the maximum likelihood estimator, achieves the optimal sample complexity of $\cO(\sqrt{d})$ across all parameter regimes. 
As a result, it is sufficient to apply  \cref{alg:gmttesterknowntruncation} with a sample complexity of $\cO(\sqrt{d}/{\alpha^2})$.

\begin{algorithm}[ht]
\caption{GaussianMeanTester with known truncation} 
\label{alg:gmttesterknowntruncation}
    \begin{algorithmic}[1]
      \Statex \textbf{Input}: 
      Sample access to the truncated normal $P$ on $\R^d$ and $\alpha > 0$ and oracle access to its support set $S$.
      \Statex \textbf{Output}: 
      ``ACCEPT'' if $P = \cN(\mathbf{0}, \tmmathbf{I}_d, S)$, ``REJECT'' if $P = \cN(\tmmathbf{\mu}, \tmmathbf{I}_d, S)$ and $\| \tmmathbf{\mu} \|_2 \geq \alpha$; both with probability at least $2/3$.
      \State Compute $\tmmathbf{\mu}'_S = \mathbb{E}_{\tmmathbf{x} \sim \mathcal{N}(\mathbf{0}, \tmmathbf{I}_d, S)}[\tmmathbf{x}]$ .
      \State Set $n = \cO ( \sqrt{d} / \alpha^2 )$.
      \State Sample $2n$ i.i.d.\ points from $P$ and denote them by $X_1,\dots,X_n$ and $Y_1,\dots,Y_n$.
      \State $Z_1 = \left( \frac{1}{n} \sum_{i=1}^n X_i - \tmmathbf{\mu}'_S \right)^\top \left(\frac{1}{n} \sum_{i=1}^n Y_i - \tmmathbf{\mu}'_S \right)$.
       \If{$ |Z_1| \leq O ( \alpha^2 ) $}
           \State \Return ''ACCEPT''
       \Else \State \Return ''REJECT''
       \EndIf   
    \end{algorithmic}  
\end{algorithm} 
The algorithm works as follows: Given the support $S$, it first calculates the truncated mean for the standard multivariate normal, denoted as $\mathbf{\mu}'_S$. Next, it draws $2n$ i.i.d. samples from the truncated normal distribution $P$ with unknown mean. The algorithm then computes the statistic: 
$$
Z_1 = \left( \frac{1}{n} \sum_{i=1}^n X_i - \tmmathbf{\mu}'_S \right)^\top \left(\frac{1}{n} \sum_{i=1}^n Y_i - \tmmathbf{\mu}'_S \right).
$$
The algorithm will return ''ACCEPT'' if $|Z_1| \leq \cO(\alpha^2)$ and ''REJECT'' otherwise.

The proof of \Cref{theorem:known_truncation_mean_testing} relies on the following two lemmas.

\begin{restatable}{lemma}{meanvariancezknowntrunc}
\label{lemma:expectation_variance_with_known_truncation}
  Let $Z_1$ be the statistics in \cref{alg:gmttesterknowntruncation} Line 4, and $\tmmathbf{\mu}'_S
  =\mathbb{E}_{\tmmathbf{x} \sim \mathcal{N} (\tmmathbf{0}, \tmmathbf{I}_d, S)} [\tmmathbf{x}]$
  (truncated mean under zero mean). Let $\tmmathbf{\mu}_S$ be the truncated mean of
  the unknown Gaussian $P$, we can show that
  \[ \mathbb{E} [Z_1] = \| \tmmathbf{\mu}_S -\tmmathbf{\mu}_S' \|_2^2 . \]
  \[ \tmop{Var} [Z_1] \leqslant O (\alpha^4 + \alpha^2 \cdot \|
     \tmmathbf{\mu}_S -\tmmathbf{\mu}_S' \|_2^2) . \]
\end{restatable}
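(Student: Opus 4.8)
The plan is to reduce everything to the first two moments of the recentered empirical means and then feed in Lemma~\ref{lm:truncatedvsnontruncated} together with the choice $n = \Theta(\sqrt d/\alpha^2)$. Write $\bar{\mb{X}} = \frac1n\sum_{i=1}^n X_i$, $\bar{\mb{Y}} = \frac1n\sum_{i=1}^n Y_i$, set $\mb{U} = \bar{\mb{X}} - \tmmathbf{\mu}_S'$ and $\mb{V} = \bar{\mb{Y}} - \tmmathbf{\mu}_S'$, and let $\tmmathbf{\delta} = \tmmathbf{\mu}_S - \tmmathbf{\mu}_S'$, so that $Z_1 = \mb{U}^\top\mb{V}$. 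Since $X$ and $Y$ come from two independent sample sets, $\mb U$ and $\mb V$ are independent, and each is a constant shift of the average of $n$ i.i.d.\ draws from $P$; hence $\mathbb{E}[\mb U] = \mathbb{E}[\mb V] = \tmmathbf{\delta}$ and $\operatorname{Cov}(\mb U)=\operatorname{Cov}(\mb V)=\frac1n\tmmathbf{\Sigma}_S$, where $\tmmathbf{\Sigma}_S$ is the truncated covariance of $P$. The expectation is then immediate: by independence, $\mathbb{E}[Z_1] = \mathbb{E}[\mb U]^\top \mathbb{E}[\mb V] = \|\tmmathbf{\delta}\|_2^2 = \|\tmmathbf{\mu}_S - \tmmathbf{\mu}_S'\|_2^2$.

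For the variance I would compute the second moment $\mathbb{E}[Z_1^2] = \mathbb{E}[\mb U^\top (\mb V \mb V^\top) \mb U]$. Conditioning on $\mb U$ and using independence, the inner expectation over $\mb V$ gives its second-moment matrix $\mathbb{E}[\mb V \mb V^\top] = \frac1n\tmmathbf{\Sigma}_S + \tmmathbf{\delta}\tmmathbf{\delta}^\top$; taking the outer expectation over $\mb U$ (whose second-moment matrix is identical) via $\mathbb{E}[\mb U^\top M \mb U] = \tmop{tr}(M\,\mathbb{E}[\mb U \mb U^\top])$ and expanding the product yields
\[
\mathbb{E}[Z_1^2] = \frac{1}{n^2}\|\tmmathbf{\Sigma}_S\|_F^2 + \frac{2}{n}\,\tmmathbf{\delta}^\top \tmmathbf{\Sigma}_S \tmmathbf{\delta} + \|\tmmathbf{\delta}\|_2^4 .
\]
Subtracting $(\mathbb{E}[Z_1])^2 = \|\tmmathbf{\delta}\|_2^4$ cancels the quartic term and leaves the clean identity
\[
\tmop{Var}[Z_1] = \frac{1}{n^2}\|\tmmathbf{\Sigma}_S\|_F^2 + \frac{2}{n}\,\tmmathbf{\delta}^\top \tmmathbf{\Sigma}_S \tmmathbf{\delta},
\]
which is exactly the analogue of Lemma~\ref{lm:z} with $\tmmathbf{\mu}_S$ replaced by the recentered mean $\tmmathbf{\delta}$ (indeed this is Lemma~\ref{lm:z} applied to the shifted samples $X_i - \tmmathbf{\mu}_S'$ and $Y_i - \tmmathbf{\mu}_S'$, which have mean $\tmmathbf{\delta}$ and covariance $\tmmathbf{\Sigma}_S$).

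It then remains to bound the two terms. Using $\tmmathbf{\delta}^\top \tmmathbf{\Sigma}_S \tmmathbf{\delta} \le \|\tmmathbf{\Sigma}_S\|_{\mathrm{op}}\|\tmmathbf{\delta}\|_2^2 \le \|\tmmathbf{\Sigma}_S\|_F \|\tmmathbf{\delta}\|_2^2$ and the covariance bound from Lemma~\ref{lm:truncatedvsnontruncated}, namely $\|\tmmathbf{\Sigma}_S\|_F \le \|\mb{I}_d\|_F + \|\tmmathbf{\Sigma}_S - \mb{I}_d\|_F = O(\sqrt d)$, gives $\tmop{Var}[Z_1] \le \frac{O(d)}{n^2} + \frac{O(\sqrt d)}{n}\|\tmmathbf{\delta}\|_2^2$. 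Plugging in $n = \Theta(\sqrt d/\alpha^2)$ makes $d/n^2 = O(\alpha^4)$ and $\sqrt d/n = O(\alpha^2)$, so $\tmop{Var}[Z_1] \le O\!\left(\alpha^4 + \alpha^2\|\tmmathbf{\mu}_S - \tmmathbf{\mu}_S'\|_2^2\right)$, as claimed.

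The routine-but-delicate part is the second-moment expansion: one must keep the independence of $\mb U$ and $\mb V$ cleanly separate from the within-block averaging that contributes the $\frac1n\tmmathbf{\Sigma}_S$ covariance, and verify that the quartic terms cancel exactly so that no $\|\tmmathbf{\delta}\|_2^4$ survives in the variance (such a term would be far too large). The only genuine analytic input beyond this bookkeeping is control of the truncated covariance $\tmmathbf{\Sigma}_S$; I would emphasize that the crude Frobenius estimate $\|\tmmathbf{\Sigma}_S\|_F = O(\sqrt d)$ from Lemma~\ref{lm:truncatedvsnontruncated} already suffices here, because the factor $1/n$ with $n = \Theta(\sqrt d/\alpha^2)$ absorbs the extra $\sqrt d$. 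Thus, contrary to what one might initially expect, no sharper operator-norm bound on $\tmmathbf{\Sigma}_S$ (e.g.\ the $O(1)$ bound obtainable by projecting onto a single direction and conditioning on an event of probability $\ge 1-\eps$) is needed to obtain the stated variance.
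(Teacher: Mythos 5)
Your proof is correct and follows essentially the same route as the paper's: recenter both empirical means by $\tmmathbf{\mu}_S'$, observe that the shift leaves the covariance $\tmmathbf{\Sigma}_S$ unchanged while the mean becomes $\tmmathbf{\mu}_S-\tmmathbf{\mu}_S'$, apply the moment computation of Lemma~\ref{lm:z} to the shifted samples, and finish with the $\|\tmmathbf{\Sigma}_S\|_F=O(\sqrt d)$ bound from Lemma~\ref{lm:truncatedvsnontruncated} and the choice $n=\Theta(\sqrt d/\alpha^2)$. The only cosmetic difference is that you re-derive the second moment via trace identities (obtaining the exact variance identity before bounding) where the paper simply cites Lemma~\ref{lm:z} for the shifted variables.
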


\begin{restatable}[Gap of Mean under Truncation]{lemma}{gapmeanknwontrunc}
\label{lemma:gap_truncated_mean}
Let \(\mathbb{E}_{\tmmathbf{y} \sim \mathcal{N} (\tmmathbf{0}, \tmmathbf{I}_d, S)} [\tmmathbf{y}] = \tmmathbf{\mu}_S'\) and \(\mathbb{E}_{\tmmathbf{y} \sim \mathcal{N} (\tmmathbf{\mu}'', \tmmathbf{I}_d, S)} [\tmmathbf{x}] = \tmmathbf{\mu}_S''\), where \(\|\tmmathbf{\mu}''\|_2^2 \geq \alpha^2\). Additionally, assume that \(\mathcal{N} (\tmmathbf{\mu}'', \tmmathbf{I}_d, S) \geq 1 - \beta\) for some constant \(\beta\). Then, it holds that
\[
\|\tmmathbf{\mu}_S' - \tmmathbf{\mu}_S''\|_2^2 \geq \Omega(\alpha^2).
\]
\end{restatable}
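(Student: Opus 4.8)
The plan is to recognize $\tmmathbf{\mu}_S' - \tmmathbf{\mu}_S''$ as (up to sign) the gradient of the population negative log-likelihood, and then exploit its strong convexity via \cref{lemma:strong_convexity_negative_loglikelihood}. Define the truncated-mean map $g(\tmmathbf{v}) := \mathbb{E}_{\tmmathbf{z}\sim\mathcal{N}(\tmmathbf{v},\tmmathbf{I}_d,S)}[\tmmathbf{z}]$, so that $\tmmathbf{\mu}_S' = g(\tmmathbf{0})$ and $\tmmathbf{\mu}_S'' = g(\tmmathbf{\mu}'')$. By \eqref{eq:gradient_of_negative_loglikelihood}, $g$ is exactly the term appearing in $\nabla\bar{\ell}$, and its Jacobian $\nabla g(\tmmathbf{v})$ equals the truncated covariance $\tmmathbf{\Sigma}_S(\tmmathbf{v})$, which is precisely the Hessian $\tmmathbf{H}_\ell(\tmmathbf{v})$ of the negative log-likelihood. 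Thus lower bounding $\|\tmmathbf{\mu}_S'' - \tmmathbf{\mu}_S'\|_2 = \|g(\tmmathbf{\mu}'') - g(\tmmathbf{0})\|_2$ reduces to controlling how much $g$ must move as its argument travels from $\tmmathbf{0}$ to $\tmmathbf{\mu}''$, which is governed by the smallest eigenvalue of $\tmmathbf{H}_\ell$ along that segment.

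Concretely, I would write the difference in integral (mean-value) form,
\[
\tmmathbf{\mu}_S'' - \tmmathbf{\mu}_S' = g(\tmmathbf{\mu}'') - g(\tmmathbf{0}) = \left(\int_0^1 \tmmathbf{H}_\ell(t\tmmathbf{\mu}'')\,dt\right)\tmmathbf{\mu}'' =: \bar{\tmmathbf{\Sigma}}\,\tmmathbf{\mu}'',
\]
and invoke \cref{lemma:strong_convexity_negative_loglikelihood} to obtain $\tmmathbf{H}_\ell(t\tmmathbf{\mu}'')\succeq\lambda\,\tmmathbf{I}_d$ for each $t\in[0,1]$, hence $\bar{\tmmathbf{\Sigma}}\succeq\lambda\,\tmmathbf{I}_d$. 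Since $\bar{\tmmathbf{\Sigma}}$ is symmetric and PSD with every eigenvalue at least $\lambda$, we have $\|\bar{\tmmathbf{\Sigma}}\,\tmmathbf{\mu}''\|_2 \geq \lambda\|\tmmathbf{\mu}''\|_2 \geq \lambda\alpha$, and therefore $\|\tmmathbf{\mu}_S' - \tmmathbf{\mu}_S''\|_2^2 \geq \lambda^2\alpha^2$. Provided $\lambda = \Omega(1)$, this is exactly the claimed $\Omega(\alpha^2)$. Note it is important to use this matrix form rather than a Cauchy--Schwarz bound on $\langle \tmmathbf{\mu}_S''-\tmmathbf{\mu}_S',\tmmathbf{\mu}''\rangle$, since the latter introduces a spurious $1/\|\tmmathbf{\mu}''\|_2$ factor.

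The main obstacle is securing $\lambda = \Omega(1)$ uniformly along the segment, since the bound in \cref{lemma:strong_convexity_negative_loglikelihood} degrades both when the truncation mass at an intermediate point $t\tmmathbf{\mu}''$ becomes small and, through the factor $\min\{1/4,\,1/(16\|\cdot\|_2^2+1)\}$, when $\|t\tmmathbf{\mu}''\|_2$ grows. For the regime relevant to the test—where $\|\tmmathbf{\mu}''\|_2$ is a bounded constant (the worst case for soundness being $\|\tmmathbf{\mu}''\|_2$ just above $\alpha$) and $\beta$ is constant—both issues vanish: each $t\tmmathbf{\mu}''$ lies in an $O(1)$ ball, so $\min\{1/4,\,1/(16\|t\tmmathbf{\mu}''\|_2^2+1)\} = \Omega(1)$, and since a Gaussian mean shift of $O(1)$ changes the mass of $S$ by at most a constant in total variation, $\mathcal{N}(t\tmmathbf{\mu}'',\tmmathbf{I}_d,S)\geq\beta'=\Omega(1)$ throughout; together these give $\lambda=\Omega(1)$. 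The residual case of large $\|\tmmathbf{\mu}''\|_2$ I would handle directly rather than through strong convexity: when $\mathcal{N}(\tmmathbf{\mu}'',\tmmathbf{I}_d,S)\geq 1-\beta$, a \cref{lm:truncatedvsnontruncated}-type estimate gives $\|\tmmathbf{\mu}_S''-\tmmathbf{\mu}''\|_2 = O(\beta\sqrt{\log(1/\beta)})$, so $\tmmathbf{\mu}_S''$ essentially tracks $\tmmathbf{\mu}''$, while $\tmmathbf{\mu}_S'$ is pulled toward the origin-facing part of $S$ and remains a constant away, keeping the gap at least $\Omega(\alpha)$.
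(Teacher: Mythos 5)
Your proof is correct and follows essentially the same route as the paper: identify $\tmmathbf{\mu}_S''-\tmmathbf{\mu}_S'$ with the gradient difference $\nabla\bar{\ell}(\tmmathbf{\mu}'')-\nabla\bar{\ell}(\tmmathbf{0})$ of the population negative log-likelihood and convert the strong convexity from \cref{lemma:strong_convexity_negative_loglikelihood} into a norm lower bound; you do this via the integrated-Hessian (mean-value) form, while the paper uses the strong-convexity inequality $\langle\nabla\bar{\ell}(\tmmathbf{\mu}'')-\nabla\bar{\ell}(\tmmathbf{0}),\tmmathbf{\mu}''\rangle\geq\tfrac{\lambda_0}{2}\|\tmmathbf{\mu}''\|_2^2$ followed by Cauchy--Schwarz, and the two are equivalent up to a factor of $2$. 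One correction: your claim that the Cauchy--Schwarz route introduces a spurious $1/\|\tmmathbf{\mu}''\|_2$ factor is mistaken, since dividing $\|\tmmathbf{\mu}_S''-\tmmathbf{\mu}_S'\|_2\,\|\tmmathbf{\mu}''\|_2\geq\tfrac{\lambda_0}{2}\|\tmmathbf{\mu}''\|_2^2$ by $\|\tmmathbf{\mu}''\|_2$ yields exactly $\|\tmmathbf{\mu}_S''-\tmmathbf{\mu}_S'\|_2\geq\tfrac{\lambda_0}{2}\|\tmmathbf{\mu}''\|_2\geq\tfrac{\lambda_0}{2}\alpha$, which is the paper's conclusion. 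Your attention to keeping $\lambda=\Omega(1)$ uniformly (the mass of $S$ along the segment and the $\min\{1/4,\,1/(16\|\cdot\|_2^2+1)\}$ factor for large $\|\tmmathbf{\mu}''\|_2$) addresses a point the paper treats only informally, by assuming $\|\tmmathbf{\mu}''\|_2^2\leq 1/16$ in the proof of \cref{theorem:known_truncation_mean_testing}.
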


\begin{proof}[Proof sketch]
Consider the negative log-likelihood function, $\bar{\ell}(\tmmathbf{0})$, with the mean set to $\tmmathbf{0}$ as the input parameter. This function is defined for a population drawn from a truncated normal distribution with an unknown mean ${\tmmathbf{\mu}}$. From \eqref{eq:gradient_of_negative_loglikelihood}, we can express the gradient of the negative log-likelihood with respect to the mean evaluated at $\tmmathbf{0}$, as follows:
    \[ \nabla \bar{\ell} (\tmmathbf{0}) = -\mathbb{E}_{\tmmathbf{x} \sim \mathcal{N}
   (\tmmathbf{\mu}, \tmmathbf{I}_d, S)} [\tmmathbf{x}]
   +\mathbb{E}_{\tmmathbf{z} \sim \mathcal{N} (\tmmathbf{0}, \tmmathbf{I}_d;
   S)} [\tmmathbf{z}] =\tmmathbf{\mu}_S -\tmmathbf{\mu}_S' . \]
   Likewise, when evaluating the gradient at $\tmmathbf{\mu}$, we have
   \[ \nabla \bar{\ell} (\tmmathbf{\mu}) = -\mathbb{E}_{\tmmathbf{x} \sim \mathcal{N}
   (\tmmathbf{\mu}, \tmmathbf{I}_d, S)} [\tmmathbf{x}]
   +\mathbb{E}_{\tmmathbf{z} \sim \mathcal{N} (\tmmathbf{\mu}, \tmmathbf{I}_d;
   S)} [\tmmathbf{z}] = \tmmathbf{0} . \]
   So, $\nabla\bar{\ell}(\tmmathbf{0})$ represents the difference between the truncated mean of the underlying distribution and that of the distribution with mean $\tmmathbf{0}$. From \Cref{lemma:strong_convexity_negative_loglikelihood}, we know that $\bar{\ell}(\cdot)$ is $\lambda_0$-strongly convex, and $\lambda_0$ is a constant if $\beta$ is a constant.
   Therefore, by leveraging the properties of strong convexity and applying the Cauchy–Schwarz inequality, we obtain the following result:
    \begin{align}
      \MoveEqLeft \sqrt{\|\tmmathbf{\mu}-\tmmathbf{0}\|_2^2 \cdot \| \nabla \bar{l}
      (\tmmathbf{\mu}) - \nabla \bar{l} (\tmmathbf{0})\|_2^2} \nonumber
       \geqslant \langle \nabla \bar{l} (\tmmathbf{\mu}) - \nabla \bar{l}
      (\tmmathbf{0}), \tmmathbf{\mu}-\tmmathbf{0} \rangle \geqslant
      \frac{\lambda_0}{2}  \|\tmmathbf{\mu}\|_2^2  \nonumber
    \end{align}
By simplifying the expression and substituting $\tmmathbf{\mu}$ with any $\| \tmmathbf{\mu''} \|_2^2 \geqslant \alpha^2$, we can show that:
\[
    \| \tmmathbf{\mu}_S'' -\tmmathbf{\mu}_S' \|_2^2 \geqslant \Omega (\alpha^2).
\]  
\end{proof}

\begin{restatable}[Known truncation tester]{theorem}{knownTruncationAlgoTheorem}
\label{theorem:known_truncation_mean_testing}
There exists an algorithm (\Cref{alg:gmttesterknowntruncation}) that takes i.i.d. samples from truncated normal
Gaussian $P$ and given oracle access to $S \subset \mathbb{R}^d$, the
effective support of $P$, distinguishing the cases for parameters (mass of
truncation) $0 < \varepsilon \leqslant 1 - \beta$, where $\beta$ is a constant and (accuracy) $\frac{1}{4} \geqslant \alpha > 0$:
\begin{itemize}
  \item \tmtextbf{(Completeness)} $P$ is a truncated Gaussian distribution
  $\cN (\tmmathbf{0}, \mb{I}_d, S)$ and $1 - \cN (\tmmathbf{0}, \mb{I}_d , S) \leqslant
  \varepsilon$. In this case, the algorithm will output yes with probability
  at least $2 / 3$.
  
  \item \tmtextbf{(Soundness)} $P$ is a truncated Gaussian distribution $\cN
  (\tmmathbf{\mu}, \mb{I}_d, S)$ where $\| \tmmathbf{\mu} \|_2 \geqslant \alpha$ and $1 -
  \cN (\tmmathbf{0}, \mb{I}_d , S) \leqslant \varepsilon$. In this case, the algorithm
  will output no with probability at least $2 / 3$.
\end{itemize}
The algorithm will take $\cO \left( \frac{\sqrt{d}}{\alpha^2} \right)$ samples
from $P$.
\end{restatable}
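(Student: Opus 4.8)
The plan is to treat \cref{alg:gmttesterknowntruncation} as a threshold test on the scalar statistic $Z_1$ and to analyze it by Chebyshev's inequality, using the two supporting lemmas to pin down its mean gap. \cref{lemma:expectation_variance_with_known_truncation} gives $\mathbb{E}[Z_1] = \|\tmmathbf{\mu}_S - \tmmathbf{\mu}_S'\|_2^2$, the squared distance between the truncated mean $\tmmathbf{\mu}_S$ of the unknown $P$ and the truncated mean $\tmmathbf{\mu}_S'$ of the zero-mean truncated Gaussian, which the algorithm computes exactly because $S$ is known; it also gives $\var[Z_1] \le \cO(\alpha^4 + \alpha^2\|\tmmathbf{\mu}_S - \tmmathbf{\mu}_S'\|_2^2)$. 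The whole test therefore reduces to deciding whether the squared distance $\|\tmmathbf{\mu}_S - \tmmathbf{\mu}_S'\|_2^2$ is $0$ or bounded away from $0$. I would fix the acceptance threshold at $\tau = c\alpha^2/2$, where $c$ is the absolute constant hidden in the $\Omega(\alpha^2)$ lower bound of \cref{lemma:gap_truncated_mean}, and take $n = C\sqrt{d}/\alpha^2$ with $C$ a large absolute constant to be fixed last.

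In the completeness case $P = \cN(\tmmathbf{0}, \mb{I}_d, S)$, so the generating mean is $\tmmathbf{0}$ and thus $\tmmathbf{\mu}_S = \tmmathbf{\mu}_S'$ exactly; hence $\mathbb{E}[Z_1] = 0$ and the variance collapses to $\var[Z_1] \le \cO(\alpha^4)$, with the hidden constant scaling as $1/C^2$ once the $n$-dependent form of the variance (the analogue of \cref{lm:z}, namely $\|\tmmathbf{\Sigma}_S\|_F^2/n^2 + (2/n)\|\tmmathbf{\Sigma}_S\|_F\|\tmmathbf{\mu}_S - \tmmathbf{\mu}_S'\|_2^2$) is combined with $\|\tmmathbf{\Sigma}_S\|_F = \cO(\sqrt d)$ from \cref{lm:truncatedvsnontruncated}. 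Chebyshev's inequality then gives $\Pr[|Z_1| > \tau] \le \var[Z_1]/\tau^2 = \cO(1/C^2)$, which is below $1/3$ for $C$ large, so the algorithm accepts with probability at least $2/3$.

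In the soundness case $P = \cN(\tmmathbf{\mu}, \mb{I}_d, S)$ with $\|\tmmathbf{\mu}\|_2 \ge \alpha$, I would apply \cref{lemma:gap_truncated_mean} with $\tmmathbf{\mu}'' = \tmmathbf{\mu}$ to obtain $\mathbb{E}[Z_1] = \|\tmmathbf{\mu}_S - \tmmathbf{\mu}_S'\|_2^2 \ge c\alpha^2 = 2\tau$. Because the threshold lies at half the guaranteed mean, $\mathbb{E}[Z_1] - \tau \ge \mathbb{E}[Z_1]/2$, and substituting the variance bound together with $\mathbb{E}[Z_1] \ge c\alpha^2$ yields $\Pr[Z_1 < \tau] \le \var[Z_1]/(\mathbb{E}[Z_1]/2)^2 \le \cO(1/(C^2 c^2) + 1/(C c))$, again below $1/3$ once $C$ is large. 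Taking $C$ to be the larger of the two requirements makes both error probabilities at most $1/3$ while keeping $n = \cO(\sqrt d/\alpha^2)$, which establishes the claimed sample complexity.

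Two points require care. First, \cref{lemma:gap_truncated_mean}, via the Hessian lower bound of \cref{lemma:strong_convexity_negative_loglikelihood}, needs the truncation set to carry constant mass under the \emph{generating} distribution $\cN(\tmmathbf{\mu}, \mb{I}_d)$, whereas the theorem phrases its mass condition through $\cN(\tmmathbf{0}, \mb{I}_d)$; I would reconcile this by reading the nontrivial-truncation hypothesis as a bound $\cN(\tmmathbf{\mu}, \mb{I}_d, S) \ge \beta$ on the actual data-generating distribution (as in the problem formulation), which is exactly what the strong-convexity bound consumes. Second, the gap constant $c$ and the variance constants are fixed by the problem and need not be separated on their own, so the crucial device is that inflating $n$ by a constant factor shrinks the variance (quadratically in the leading term, linearly in the cross term) without altering the mean gap; this is what lets a single large $C$ drive both error probabilities below $1/3$. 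I expect this constant-balancing, together with the bookkeeping of the measure assumptions across the two lemmas (in particular the behavior of the gap when $\|\tmmathbf{\mu}\|_2$ is large, which is absorbed into \cref{lemma:gap_truncated_mean}), to be the main obstacle; the probabilistic core is a routine two-sided Chebyshev argument once the mean, variance, and gap are in place.
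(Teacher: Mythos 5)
Your proposal is correct and follows essentially the same route as the paper: both reduce the theorem to \cref{lemma:expectation_variance_with_known_truncation} and \cref{lemma:gap_truncated_mean}, threshold $Z_1$ at a constant multiple of $\alpha^2$, and apply Chebyshev's inequality separately in the completeness case (where $\tmmathbf{\mu}_S = \tmmathbf{\mu}_S'$ forces $\mathbb{E}[Z_1]=0$) and the soundness case (where the gap lemma gives $\mathbb{E}[Z_1] \geq \Omega(\alpha^2)$), with a large enough multiple of $\sqrt{d}/\alpha^2$ samples driving both failure probabilities below $1/3$. The two points you flag for care (which distribution the mass condition refers to, and the degradation of the strong-convexity constant when $\|\tmmathbf{\mu}\|_2$ is large) are handled no more carefully in the paper, which simply assumes $\|\tmmathbf{\mu}\|_2^2 = \alpha^2 \leq 1/16$ at the start of its proof.
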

 
\begin{proof}[Proof sketch]
    Using \Cref{lemma:expectation_variance_with_known_truncation} and \Cref{lemma:gap_truncated_mean}, we apply Chebyshev inequality in the two cases:
\begin{enumerate}
    \item \textbf{Completeness}: We know that $\mathbb{E}[Z_1] = 0$ and $\var[Z_1] \leq O(\alpha^4)$. Thus, by Chebyshev's inequality, with probability at least $2/3$ using, 
    \[
       Z_1 \leq \cO(\alpha^2).
    \]
    \item \textbf{Soundness}: Let the non-truncated mean be $\tmmathbf{\mu}''$ (and $\| \tmmathbf{\mu}'' \|^2_2 \geq \alpha^2$) with $\| \tmmathbf{\mu}'' \|^2_2 \geq \alpha^2$. Here, $\mathbb{E}[Z_1] = \| \tmmathbf{\mu}''_S -  \tmmathbf{\mu}'_S \|_2^2$ and $\var[Z_1] \leq O(\alpha^4 + \alpha^2 \| \tmmathbf{\mu}''_S -  \tmmathbf{\mu}'_S \|_2^2)$. Applying Chebyshev's inequality, with probability at least $2/3$, we have 
    \[
        Z_1 \geq \| \tmmathbf{\mu}''_S -  \tmmathbf{\mu}'_S \|_2^2 - O(\alpha^2 + \alpha \| \tmmathbf{\mu}''_S -  \tmmathbf{\mu}'_S \|) \geq \Omega(\alpha^2).
    \]
\end{enumerate}    
\end{proof}

\section{Conclusion and Future Work}
In this work, we highlight the critical interplay between truncation mass $\eps$ and accuracy $\alpha$ in determining the sample complexity required for {\GMT} in both known and unknown truncation regimes.
\begin{itemize}
    \item \textbf{Unknown Truncation}: For $\eps \lesssim \alpha/ \sqrt{\log(1/\alpha)}$, we establish the tight sample complexity of $\Theta(\sqrt{d})$, indicating the effectiveness of testing under mild truncation. However, as $\eps$ approaches $\alpha$, the sample complexity sharply jumps to $\Theta(d)$,  indicating a much more challenging testing regime (where testing brings no sample complexity savings over learning). Furthermore, when $\alpha \leq \eps$, the sample complexity becomes infinite, as testing becomes information-theoretically unfeasible.
    \item \textbf{Known Truncation}: In contrast, when the truncation is known, the sample complexity remains $\Theta(\sqrt{d})$ across all parameter ranges, even when $\eps \gtrsim \alpha$. Thus, having prior knowledge of truncation can facilitate efficient testing regardless of the relationship between $\alpha$ and $\eps$.
\end{itemize}
Overall, this is the first work that provide valuable insights into the sample complexity for efficient {\GMT}, emphasizing the importance of understanding truncation in designing algorithms for robust statistics.

In future work, we aim to generalize the soundness case by extending our analysis to any arbitrary (unknown) covariance matrix $\tmmathbf{\Sigma}$, beyond the identity-covariance case. Another avenue of research, inspired by the recent line of work on convex truncation~\cite{DeNS23}, is to explore whether structural assumptions on the truncation set (whether known or unknown), for instance convexity or rotational symmetry, could enable significantly more sample-efficient algorithms for the task.

\section{Acknowledgments}
 Cl\'{e}ment L. Canonne gratefully acknowledge support from an ARC DECRA (DE230101329).
 Joy Qiping Yang is supported by a JD Technology Research Scholarship in Artificial intelligence.
 Themis Gouleakis gratefully acknowledges support by the Nanyang Technology University start up grant. 
 Yuhao Wang sincerely thanks Yi-Jun Chang for the inspiration of this work through the final project of his CS6234 Advanced Algorithms class at National University of Singapore.
 


\newcommand{\etalchar}[1]{$^{#1}$}

\newpage
\onecolumn
\appendix

\section{Omitted proofs from \Cref{sec:test_with_unknown_truncation}}

We will require the below well-known results for the statistic $Z$, show in~\cite[Lemma~4.1]{canonne2021random}. For completeness, we provide the proof below:

\meanandvarianceofz*

\begin{proof}
We will prove this for any $d$-dimensional distribution $X \sim P$.
Suppose the $\tmmathbf{\mu} = \mathbb{E}[X]$ and denote $\Sigma$ its covariance matrix.
Draw $\mb{X} = \{x^{(1)}, \dots, x^{(n)}\}$, $\mb{Y} = \{y^{(1)}, \dots, y^{(n)}\}$ i.i.d. $2 n$ samples from $P$; let
$\bar{\tmmathbf{X}} \assign \frac{1}{n}  \sum_{i = 1}^n \tmmathbf{X}_i$,
$\bar{\tmmathbf{Y}} \assign \frac{1}{n}  \sum_{i = 1}^n \tmmathbf{Y}_i$.
\begin{align*}
\hspace{-1cm}
     Z =
     \langle \bar{\tmmathbf{X}}, \bar{\tmmathbf{Y}} \rangle 
      = \frac{1}{n^2} \sum_{i = 1}^d \sum_{k = 1}^n \sum_{l = 1}^n X_{l, i} Y_{k, i}. \qquad 
     \mathbb{E} [Z]
     = 
     \| \tmmathbf{\mu} \|_2^2. 
\end{align*}
The proof follows from the fact that $\bar{\tmmathbf{X}}$ and $\bar{\tmmathbf{Y}}$ are independent, thus $\tmmathbf{X}_i \indep \tmmathbf{X}_j$ regardless of $i$ and $j$. 
Note that $\tmop{Var} [Z] = \mathbb{E} [Z^2] -\mathbb{E}^2 [Z]$, we start by computing the second moment of the statistic:

{\begin{align*}
  \mathbb{E} [Z^2] 
  & = 
  \mathbb{E} \left[ \left( \frac{1}{n^2}  \sum_{i = 1}^d \sum_{k = 1}^n \sum_{l = 1}^n X_{l, i} Y_{k, i} \right)  
  \left( \frac{1}{n^2}  \sum_{j = 1}^d \sum_{k' = 1}^n \sum_{l' = 1}^n X_{l', j} Y_{k', j} \right) \right] \\
  & = 
  \frac{1}{n^4}  \sum_{i = 1}^d \sum_{j = 1}^d \sum_{k = 1}^n \sum_{l = 1}^n \sum_{k' = 1}^n \sum_{l' = 1}^n 
  \mathbb{E} [X_{l, i} Y_{k, i} X_{l', j} Y_{k', j}] \\
  & = 
  \frac{1}{n^4}  \sum_{i = 1}^d \sum_{j = 1}^d \left( \sum_{l = 1}^n \sum_{l' = 1}^n \mathbb{E} [X_{l, i} X_{l', j}] 
  \sum_{k = 1}^n \sum_{k' = 1}^n \mathbb{E} [Y_{k, i} Y_{k', j}] \right) \\
  & = 
  \frac{1}{n^4}  \sum_{i = 1}^d \sum_{j = 1}^d \left( \sum_{l = 1}^n \sum_{l' = 1}^n \mathbb{E} [X_{l, i} X_{l', j}] \right) 
  \left( \sum_{k = 1}^n \sum_{k' = 1}^n \mathbb{E} [Y_{k, i} Y_{k', j}] \right) \\
  & = 
  \frac{1}{n^4}  \sum_{i = 1}^d \sum_{j = 1}^d \left( \sum_{l = 1}^n \sum_{l' = 1}^n \mathbb{E} [X_{l, i} X_{l', j}] \right)^2 \\
  & = 
  \frac{1}{n^4}  \sum_{i = 1}^d \sum_{j = 1}^d \left( \sum_{l = 1}^n \mathbb{E} [X_{l, i} X_{l, j}] + 
  \sum_{l \neq l'} \mathbb{E} [X_{l, i}] \mathbb{E} [X_{l', j}] \right)^2 \\
  & = 
  \frac{1}{n^4}  \sum_{i = 1}^d \sum_{j = 1}^d \left( \sum_{l = 1}^n \tmop{Cov} (X_{l, i}, X_{l, j}) + 
  \mathbb{E} [X_{l, i}] \mathbb{E} [X_{l, j}] + n (n - 1) \mu_i \mu_j \right)^2 \\
  & = 
  \sum_{i = 1}^d \sum_{j = 1}^d \left( \frac{1}{n} \tmop{Cov} (X_i, X_j) + \mu_i \mu_j \right)^2 \\
  & = 
  \sum_{i = 1}^d \sum_{j = 1}^d \left( \frac{1}{n^2} \Sigma_{i, j}^2 + \frac{2}{n} \Sigma_{i, j} \mu_i \mu_j + \mu_i^2 \mu_j^2 \right) \\
  & = 
  \sum_{i = 1}^d \sum_{j = 1}^d \left( \frac{1}{n^2} \Sigma_{i, j}^2 + \frac{2}{n} \Sigma_{i, j} \mu_i \mu_j \right) 
  + \| \mu \|_2^4
\end{align*}}

Then substitute \cref{eq:expz} to complete the computation of \cref{eq:variancez}
\begin{align*}
    \tmop{Var} [Z] 
    =
    \mathbb{E} [Z^2] -\mathbb{E}^2 [Z] 
    & = \sum_{i = 1}^d \sum_{j
   = 1}^d \left( \frac{1}{n^2} \Sigma_{i, j}^2 + \frac{2}{n} \Sigma_{i, j} \mu_i \mu_j \right) \\
   & = 
   \frac{1}{n^2}  \sum_{1
   \leqslant i, j \leqslant d} \Sigma^2_{i, j} + \frac{2}{n}  \sum_{i, j}
   \Sigma_{i, j} \mu_i \mu_j \\
   & =
   \frac{\| \Sigma \|_F^2}{n^2} + \frac{2}{n}  \sum_{i, j} \Sigma_{i, j} \mu_i \mu_j \\
   & \leqslant 
   \frac{\| \Sigma \|_F^2}{n^2} + \frac{2}{n}  \sqrt{\sum_{i, j} \Sigma_{i, j}^2}  \sqrt{\sum_{i, j} \mu_i^2 \mu_j^2} \tag{By Cauchy-Schwarz}\\
   & = 
   \frac{\| \Sigma \|_F^2}{n^2} + \frac{2}{n}  \| \Sigma \|_F  \cdot \| \mu \|_2^2\qedhere
\end{align*}
\end{proof}

\truncatedvsnontruncated*
\begin{proof}

We establish each statement separately. 

\textbf{Bound on the mean $\| \tmmathbf{\mu}_S - \tmmathbf{\mu} \|_2$:}
Consider the region $\bar{S}$, which contributes the most to the change in the mean or covariance matrix in terms of the Frobenius norm. Let $\tmmathbf{v}_S$ be the unit vector in the direction of the truncated mean $\tmmathbf{\mu}_S$. For any unit vector $\tmmathbf{v}$, the region that impacts the expectation $\mathbb{E}[\tmmathbf{v}^T \tmmathbf{x}]$ or $\var[\tmmathbf{v}^T \tmmathbf{x}]$ the most corresponds to truncating the $\eps$-tail of $\tmmathbf{v}^T \tmmathbf{x}$. The change in the mean in this direction can be bounded by $\cO(\eps \sqrt{\log (1/\eps)})$, and similarly, the variance of $\tmmathbf{v}^T \tmmathbf{x}$ changes by at most 
$\cO(\eps \log (1/\eps))$, as can be shown by relatively standard and elementary computations on a single-dimensional standard Gaussian.
Thus, for the mean shift, we have
\[
    \| \tmmathbf{\mu}_S - \tmmathbf{\mu} \|_2 = \| \tmmathbf{\mu}_S \| = \cO(\eps \sqrt{\log (1/\eps)})
\]
Even if the region $\bar{S}$ fully truncates its $\eps$ mass in the direction of $\tmmathbf{v}_S$, the mean shift in that direction is at most $\cO(\eps \sqrt{\log (1/\eps)})$.

\textbf{Bound on the covariance $\| \tmmathbf{\Sigma}_S -\tmmathbf{I}_d\|_F$:}
Next, we turn to the covariance matrix. For any unit vector $\tmmathbf{v}$, the variance of $\tmmathbf{v}^T \tmmathbf{x}$ in the truncated distribution can be expressed as:
\[
    \var[\tmmathbf{v}^T \tmmathbf{x}]= \mathbb{E}[(\tmmathbf{v}^T\tmmathbf{x})^2] - 
    (\mathbb{E}[\tmmathbf{v}^T\tmmathbf{x}])^2.
\]
For the truncated Gaussian, the variance of $\tmmathbf{v}^T \tmmathbf{x}$ differs from 1 by at most $\cO(\eps\log (1/\eps))$, i.e.,\footnote{We believe one can prove a bound of $\cO(\eps \log 1/\eps)$ with a more sophisticated analysis; however, this weaker bound suffices for our purposes.},
\[ 
    \tmop{Var} [\tmmathbf{v}^T\tmmathbf{x}] - 1 =\mathbb{E} [(\tmmathbf{v}^T\tmmathbf{x})^2] -\mathbb{E} [\tmmathbf{v}^T\tmmathbf{x}]^2 - \tmmathbf{v}^T \tmmathbf{I}_d \tmmathbf{v}
\]
where
\begin{align*}
    \mathbb{E} [\tmmathbf{v}^T\tmmathbf{x}]^2
    & =
    (\tmmathbf{v}^T \tmmathbf{\mu}_S) \cdot (\tmmathbf{\mu}_S^T \tmmathbf{v}) \\
    \mathbb{E} [(\tmmathbf{v}^T\tmmathbf{x})  (\tmmathbf{v}^T\tmmathbf{x})^T] - \tmmathbf{v}^T \tmmathbf{I}_d \tmmathbf{v}
    & =
    \tmmathbf{v}^T (\mathbb{E} [\tmmathbf{x} \tmmathbf{x}^T] - \tmmathbf{I}_d) \tmmathbf{v} = \tmmathbf{v}^T  (\Sigma_S
   + \tmmathbf{\mu}_S \tmmathbf{\mu}^T_S - I_d) \tmmathbf{v} 
\end{align*}
Thus
\[ 
    | \tmop{Var} [\tmmathbf{v}^T\tmmathbf{x}] - 1 | = | \tmmathbf{v}^T  (\Sigma_S - I_d) \tmmathbf{v} | \leqslant \cO
   (\varepsilon \log 1 / \varepsilon) . 
\]
Now, recall the relationship between the spectral norm and the Frobenius norm: if the spectral norm of $\tmmathbf{\Sigma}_S - \tmmathbf{I}_d$ is bounded by $\cO(\eps \log(1/\eps)) = \cO(1)$, then the Frobenius norm satisfies
$\cO(\sqrt{d})$.

\end{proof}

\smalltruncation*
\begin{proof}
  When $\tmmathbf{X}, \tmmathbf{Y}$ come from the truncated Gaussian
  distribution $\cN (\tmmathbf{\mu}, \tmmathbf{I}_d, S)$, by 
  \cref{lm:z}, we know that for random variable $Z$
  \[ Z = \langle \bar{\tmmathbf{X}}, \bar{\tmmathbf{Y}} \rangle, \]
  the following condition holds:
  
  \begin{align*}
    \mathbb{E} [Z] & = \langle \mathbb{E}[\overline{\tmmathbf{X}}],
    \mathbb{E}[\overline{\tmmathbf{Y}}] \rangle = \|\tmmathbf{\mu}\|_2^2\\
    \var[Z] & \leqslant \frac{\| \Sigma_S \|_F^2}{n^2} + \frac{2}{n} \|
    \Sigma_S \|_F \|\tmmathbf{\mu}_S \|_2^2 .
  \end{align*}
  
  By \cref{lm:truncatedvsnontruncated},
  \begin{equation}
    \|\tmmathbf{\mu}_S -\tmmathbf{\mu}\|_{I_d} \leqslant \cO \left(
    \varepsilon \cdot \sqrt{\log (1 / \varepsilon)} \right) \infixand \|
    \Sigma_S -\tmmathbf{I}_d \|_F \leqslant \cO (\sqrt{d}) .
  \end{equation}
  \paragraph{Completeness:} In the completeness case, we want to show the
  expectation of $Z$ is small when $\|\tmmathbf{\mu}\|_2 = 0$.
  
  \begin{align*}
    \mathbb{E} [Z] & = \|\tmmathbf{\mu}_S \|_2^2 \leqslant O (\varepsilon^2
    \cdot \log (1 / \varepsilon)) \leqslant O (c_1^2 \cdot \alpha^2) = O
    (\alpha^2) .\\
    \tmop{Var} [Z] & \leqslant \frac{\| \Sigma_S \|_F^2}{n^2} + \frac{2}{n} \|
    \Sigma_S \|_F \|\tmmathbf{\mu}_S \|_2^2\\
    & \leqslant \frac{(\| \Sigma_S -\tmmathbf{I}_d \|_F +\|\tmmathbf{I}_d
    \|_F)^2}{n^2} + \frac{2}{n}  (\| \Sigma_S -\tmmathbf{I}_d \|_F
    +\|\tmmathbf{I}_d \|_F)  (\|\tmmathbf{\mu}_S -\tmmathbf{\mu}\|_2
    +\|\tmmathbf{\mu}\|_2)^2\\
    & \lesssim \frac{(\sqrt{d} +\|\tmmathbf{I}_d \|_F)^2}{n^2} + \frac{2}{n} 
    (\sqrt{d} +\|\tmmathbf{I}_d \|_F)  \left( \varepsilon \cdot \sqrt{\log (1
    / \varepsilon)} + 0 \right)^2\\
    & = O \left( \frac{d}{n^2} \right) + O \left( \frac{\sqrt{d}}{n}  \eps^2
    \log 1 / \varepsilon \right)\\
    & \lesssim \underbrace{\frac{d}{n^2}}_{\ll \alpha^4} +
    \underbrace{\frac{\sqrt{d}}{n}\cdot \alpha^2}_{\ll \alpha^4}\\
    & \lesssim \alpha^4
  \end{align*}
  
  Since $n \gtrsim \frac{\sqrt{d}}{\alpha^2}$ and $\alpha \gtrsim \eps\sqrt{\log(1/\eps)}$, both terms are much smaller
  than $\alpha^4$. By Chebyshev's inequality, we have:
  \[ \Pr \left[ Z -\|\tmmathbf{\mu}_S \|_2^2 \geqslant \frac{1}{2}
     \|\tmmathbf{\mu}_S \|_2^2 \right] \leqslant \frac{4 \tmop{Var}
     [Z]}{\mathbb{E}^2 [Z]} \]
  Using the bounds on $\mathbb{E} [Z]$ and $\var{[Z]}$, this gives:
  \[ \Pr [Z \geqslant \Omega (\alpha^2)] \leqslant \frac{1}{9} . \]
  Thus, the algorithm outputs ``ACCEPT'' with high probability in the
  completeness case.
  
  \paragraph{Soundness:} In the soundness case, we have that
  $\|\tmmathbf{\mu}\|_2 \geqslant c_1 \alpha \geqslant \varepsilon \cdot
  \sqrt{\log (1 / \varepsilon)}$. We now show that $Z$ is large in this case:
  \begin{equation}
    \mathbb{E} [Z] = \|\tmmathbf{\mu}_S \|_2^2 \geqslant (\|\tmmathbf{\mu}\|_2
    -\|\tmmathbf{\mu}-\tmmathbf{\mu}_S \|_2)^2 = \left( \|\tmmathbf{\mu}\|_2 -
    \cO \left( \eps \cdot \sqrt{\log \frac{1}{\varepsilon}} \right) \right)^2
    \geqslant \left( \|\tmmathbf{\mu}\|_2 - \cO (\alpha) \right)^2 \geqslant
    \Omega (\|\tmmathbf{\mu}\|_2^2) . \label{eq:truncated_mean_still_large}
  \end{equation}
  Similarly, the variance of $Z$ is bounded as:
  \begin{eqnarray*}
    \tmop{Var} [Z] & \leqslant & \frac{\| \Sigma_S \|_F^2}{n^2} + \frac{2}{n}
    \| \Sigma_S \|_F \|\tmmathbf{\mu}_S \|_2^2\\
    & \leqslant & \frac{(\| \Sigma_S -\tmmathbf{I}_d \|_F +\|\tmmathbf{I}_d
    \|_F)^2}{n^2} + \frac{2}{n}  (\| \Sigma_S -\tmmathbf{I}_d \|_F
    +\|\tmmathbf{I}_d \|_F)  \|\tmmathbf{\mu}_S \|_2^2\\
    & \lesssim & \frac{(\sqrt{d} + \sqrt{d})^2}{n^2} + \frac{2}{n}  (\sqrt{d}
    + \sqrt{d})  \|\tmmathbf{\mu}_S \|_2^2\\
    & = & O \left( \frac{d}{n^2} \right) + O \left(
    \frac{\sqrt{d}}{n} \|\tmmathbf{\mu}_S \|_2^2 \right)\\
    & \lesssim & \frac{d}{n^2} + \frac{\sqrt{d}}{n} \mathbb{E} [Z] \lesssim
    \mathbb{E} [Z]^2
  \end{eqnarray*}
  using that $n \geqslant \Omega \left( \frac{\sqrt{d}}{\alpha^2} \right)$ and recalling that $\mathbb{E} [Z] = \| \tmmathbf{\mu}_S \|_2^2 \geqslant \Omega
  (\alpha^2)$ via \eqref{eq:truncated_mean_still_large} in the last step. By
  Chebyshev's inequality, we get:
  \[ \Pr \left[ \|\tmmathbf{\mu}_S \|_2 - Z \geqslant \frac{1}{2}
     \mathbb{E}[Z] \right] \leqslant \frac{4 \tmop{Var} [Z]}{\mathbb{E}[Z]^2}
     \Rightarrow \Pr [Z \leqslant O (\|\tmmathbf{\mu}_S \|_2)] \leqslant
     \frac{1}{9} \]
  Thus, with high probability:
  \[ \Pr [Z \leqslant O (\alpha^2)] \leqslant \frac{1}{9} . \]
  Hence, the algorithm outputs "REJECT" with high probability in the soundness
  case.
\end{proof}

\samplecomplexitygap*
\begin{proof}
This is a consequence of sample complexity lower bound of $\Omega(d)$ from \Cref{lemma:lower_bound_truncation_near_accuracy}
and the robust mean estimation \cite[Proposition 1.20]{diakonikolas2023algorithmic} upper bound of $O(d)$.
\end{proof}

\section{Proof of \Cref{theorem:known_truncation_mean_testing}}

\meanvariancezknowntrunc*
\begin{proof}
By linearity of expectation and independence between $X_i$s and $Y_i$s,
\[ \mathbb{E} [Z_1] =\mathbb{E} \left[ \left( \frac{1}{n}  \sum_{i = 1}^n X_i
   -\tmmathbf{\mu}_S' \right)^T \right] \mathbb{E} \left[ \left( \frac{1}{n} 
   \sum_{i = 1}^n Y_i -\tmmathbf{\mu}_S' \right) \right] =
   (\tmmathbf{\mu}-\tmmathbf{\mu}_S')^T  (\tmmathbf{\mu}-\tmmathbf{\mu}_S') .
\]
Think of $\tilde{X}_i = X_i -\tmmathbf{\mu}_S'$ as a random variable
($\tilde{Y}_i = Y_i -\tmmathbf{\mu}_S'$ as the other random variable), denote
$\tilde{\Sigma}$ the covariance of a single $\tilde{X}_i$, and
$\tilde{\tmmathbf{\mu}}$ the mean. We have that as in the proof of \Cref{lm:z},
\[ \tmop{Var} \left[ \left( \frac{1}{n}  \sum_{i = 1}^n \tilde{X}_i \right)^T 
   \left( \frac{1}{n}  \sum_{i = 1}^n \tilde{Y}_i \right) \right] \leqslant
   \frac{\| \tilde{\Sigma} \|_F^2}{n^2} + \frac{2}{n}  \| \tilde{\Sigma} \|_F 
   \| \tilde{\tmmathbf{\mu}} \|_2^2 . \]
We know that shifting the location of a random variable does not affect the
covariance matrix, and thus $\tilde{\Sigma} = \Sigma_S$, and
$\tilde{\tmmathbf{\mu}} =\tmmathbf{\mu}_S -\tmmathbf{\mu}_S'$, which means
\begin{eqnarray*}
  \tmop{Var} [Z_1] & \leqslant & \frac{\| \Sigma_S \|_F^2}{n^2} + \frac{2}{n} 
  \| \Sigma_S \|_F  \| \tmmathbf{\mu}_S-\tmmathbf{\mu}_S' \|_2^2\\
  & \leqslant & \frac{(\| \Sigma_S -\tmmathbf{I}_d \|_F + \| \tmmathbf{I}_d
  \|_F)^2}{n^2} + \frac{2}{n}  (\| \Sigma_S -\tmmathbf{I}_d \|_F + \|
  \tmmathbf{I}_d \|_F)  \| \tmmathbf{\mu}_S-\tmmathbf{\mu}_S' \|_2^2\\
  & \lesssim & \frac{d}{n^2} + \frac{\sqrt{d}}{n}  \|
  \tmmathbf{\mu}_S-\tmmathbf{\mu}_S' \|_2^2,
\end{eqnarray*}
where the last step follows from \Cref{lm:truncatedvsnontruncated}. Letting $n = O \left(
\frac{\sqrt{d}}{\alpha^2} \right)$, we conclude our proof.
\end{proof}

\gapmeanknwontrunc*
\begin{proof}
Consider the negative log-likelihood function, $\bar{\ell}(\tmmathbf{0})$, with the mean set to $\tmmathbf{0}$ as the input parameter. This function is defined for a population drawn from a truncated normal distribution with an unknown mean ${\tmmathbf{\mu}}$. From \eqref{eq:gradient_of_negative_loglikelihood}, we can express the gradient of the negative log-likelihood with respect to the mean evaluated at $\tmmathbf{0}$, as follows:
    \[ \nabla \bar{\ell} (\tmmathbf{0}) = -\mathbb{E}_{\tmmathbf{x} \sim \mathcal{N}
   (\tmmathbf{\mu}, \tmmathbf{I}_d, S)} [\tmmathbf{x}]
   +\mathbb{E}_{\tmmathbf{z} \sim \mathcal{N} (\tmmathbf{0}, \tmmathbf{I}_d,
   S)} [\tmmathbf{z}] =\tmmathbf{\mu}_S -\tmmathbf{\mu}_S' . \]
   Likewise, when evaluating the gradient at $\tmmathbf{\mu}$, we have
   \[ \nabla \bar{\ell} (\tmmathbf{\mu}) = -\mathbb{E}_{\tmmathbf{x} \sim \mathcal{N}
   (\tmmathbf{\mu}, \tmmathbf{I}_d, S)} [\tmmathbf{x}]
   +\mathbb{E}_{\tmmathbf{z} \sim \mathcal{N} (\tmmathbf{\mu}, \tmmathbf{I}_d,
   S)} [\tmmathbf{z}] = \tmmathbf{0} . \]
   So, $\nabla\bar{\ell}(\tmmathbf{0})$ represents the difference between the truncated mean of the underlying distribution and that of the distribution with mean $\tmmathbf{0}$. From \Cref{lemma:strong_convexity_negative_loglikelihood}, let $\lambda_0 = \frac{1}{2^{13}}  \left( \frac{\beta}{C} \right)^4
   \min \left\{ \frac{1}{4}, \frac{1}{16 \| \tmmathbf{\mu} \|_2^2 + 1} \right\}$, we know that $\bar{\ell}(\cdot)$ is $\lambda_0$-strongly convex, and $\lambda_0$ is a constant if $\beta$ is a constant and $\| \tmmathbf{\mu} \|^2_2 \leqslant \frac{1}{16}$.
   Therefore, by leveraging the properties of strong convexity and applying the Cauchy–Schwarz inequality, we obtain the following result:
   \[ \sqrt{\|\tmmathbf{\mu}-\tmmathbf{0}\|_2^2 \cdot \| \nabla \bar{l}
   (\tmmathbf{\mu}) - \nabla \bar{l} (\tmmathbf{0})\|_2^2} \geqslant \langle
   \nabla \bar{l} (\tmmathbf{\mu}) - \nabla \bar{l} (\tmmathbf{0}),
   \tmmathbf{\mu}-\tmmathbf{0} \rangle \geqslant \frac{\lambda_0}{2}
   \|\tmmathbf{\mu}\|_2^2 \]
By simplifying the expression and substituting $\tmmathbf{\mu}$ with any $\| \tmmathbf{\mu''} \|_2^2 \geqslant \alpha^2$, we can show that:
\[
    \| \tmmathbf{\mu}_S'' -\tmmathbf{\mu}_S' \|_2^2 \geqslant \Omega (\alpha^2).
\]
\end{proof}

\knownTruncationAlgoTheorem*
\begin{proof}
Suppose $\| \tmmathbf{\mu} \|_2^2 = \alpha^2 \leqslant \frac{1}{16}$, then we have
\[ \min
   \left\{ \frac{1}{4}, \frac{1}{16 \| \mu \|_2^2 + 1} \right\} = \frac{1}{4}
   . \]
And so $\lambda_0$ is a constant.

\paragraph{Completeness:}When $P$ is a truncated Gaussian $\mathcal{N}
(\tmmathbf{0}, \tmmathbf{I}_d, S)$. By 
\cref{lemma:expectation_variance_with_known_truncation}, we have that
when $n \geqslant \Omega \left( \frac{\sqrt{d}}{\lambda_0^2 \alpha^2} \right)
= \Omega \left( \frac{\sqrt{d}}{\alpha^2} \right)$,
\[ \mathbb{E} [Z_1] =\tmmathbf{0} \infixand \tmop{Var} [Z_1] \leqslant O
   (\alpha^4) . \]
By Chebyshev's inequality, for $n$ being a large enough multiple of
$\frac{\sqrt{d}}{\alpha^2}$, we have that,
\[ \Pr \left[ Z_1 \geqslant \frac{3}{2} c_2 \cdot \alpha^2 \right] \leqslant O
   \left( \frac{\tmop{Var} [Z_1]}{c_2 \cdot \alpha^4} \right) \leqslant
   \frac{1}{10} . \]
\paragraph{Soundness:}When $P$ is a truncated Gaussian $\mathcal{N}
(\tmmathbf{\mu}, \tmmathbf{I}_d, S)$ and $\| \tmmathbf{\mu} \|_2^2 \geqslant
\alpha^2$. By~\cref{lemma:gap_truncated_mean}, we have that the gap under truncation is:
\[ \| \tmmathbf{\mu}_S -\tmmathbf{\mu}_S' \|_2^2 \geqslant \Omega (\alpha^2) =
   c_2 \cdot \alpha^2 . \]

By \cref{lemma:expectation_variance_with_known_truncation}, we have that
when $n \geqslant \Omega \left( \frac{\sqrt{d}}{\lambda_0^2 \alpha^2} \right)
= \Omega \left( \frac{\sqrt{d}}{\alpha^2} \right)$,
\[ \mathbb{E} [Z_1] = \| \tmmathbf{\mu}-\tmmathbf{\mu}_S \|_2^2 \infixand
   \tmop{Var} [Z_1] \leqslant O (\alpha^4 + \alpha^2  \| \tmmathbf{\mu}_S
   -\tmmathbf{\mu}_S' \|_2^2) . \]
By Chebyshev's inequality, we have that,
\[ \Pr \left[ Z_1 \leqslant \frac{3}{2} c_1 \cdot \alpha^2 \right] \leqslant
   \Pr \left[ Z_1 \leqslant \| \tmmathbf{\mu}-\tmmathbf{\mu}_S \|_2^2 +
   \frac{1}{2} c_2 \nobracket \cdot \alpha^2 \nobracket \right] \leqslant
   \frac{\tmop{Var} [Z_1]}{\left( \frac{1}{2} c_2 \nobracket \cdot \alpha^2
   \nobracket \right)^2} \leqslant O \left( \frac{\alpha^4 + \alpha^2  \|
   \tmmathbf{\mu}_S -\tmmathbf{\mu}_S' \|_2^2}{c_2^2 \nobracket \cdot \alpha^4
   \nobracket} \right) . \]
Let $n$ be a large enough multiple of $\frac{\sqrt{d}}{\alpha^2}$, then
\[ \Pr \left[ Z_1 \leqslant \frac{3}{2} c_1 \cdot \alpha^2 \right] \leqslant O
   \left( \frac{\alpha^4 + \alpha^2  \| \tmmathbf{\mu}_S -\tmmathbf{\mu}_S'
   \|_2^2}{c_2^2 \cdot \alpha^4} \right) = O \left( \frac{\alpha^4 + c_2 \cdot
   \alpha^4}{c_2^2 \cdot \alpha^4} \right) \leqslant \frac{1}{10} . \]   
\end{proof}

\section{Proof of \Cref{lemma:lower_bound_truncation_near_accuracy}}
\label{appendix:sec:chi_square_upper_bound}
\lbunknowntruncation*
\begin{proof}
We begin by constructing a one-dimensional truncated normal distribution $A = \mathcal{N} (\alpha, 1, S)$, where the truncated mass is $\eps$. This means 
$\Pr_{x \sim \mathcal{N} (\alpha, 1)} [x \in S] = 1 - \eps$. We can determine the ($1 - \eps$)-quantile as:
 \[ 
     b = \alpha + \sqrt{2} \tmop{erf}^{- 1} (1 - 2 \eps) . 
 \]
which defines the truncation set as 
$S \coloneqq (- \infty, b]$.

Let $\alpha (\eps) \coloneqq \alpha = \Theta \left( \eps \sqrt{\log \frac{1}{\eps}} \right)$. For any $\eps$, we can find a constant $c_2 = \Theta(1)$ such that $\mathbb{E} [A] = 0$:
\[
    \mathbb{E}_{X \sim A} [X] 
    =
    \alpha - \frac{\exp \left( - \frac{1}{2} 
    \left( b - \alpha \right)^2 \right)}{\sqrt{2 \pi}  (1 -
    \varepsilon)} 
    = 0, 
\]
which is equivalent to:
\[ 
    \alpha = \frac{\exp (- (\tmop{erf}^{- 1} (1 - 2 \eps))^2)}{\sqrt{2 \pi}  (1 - \eps)} 
    =
    \Theta \left( \eps \sqrt{\log \frac{1}{\eps}} \right). 
\]
Next, we compute an upper bound on the chi-squared divergence between the truncated distribution $A$ and the standard normal distribution $\mathcal{N}(0, 1)$. We find that 
\begin{eqnarray*}
  \chi^2 (A, \mathcal{N} (0, 1)) & = & \left( \int_{- \infty}^b \frac{\left( \exp \left( - \frac{(x -
  \alpha)^2}{2} \right) / \sqrt{2 \pi}  (1 - \varepsilon) \right)^2}{\exp
  \left( - \frac{x^2}{2} \right) / \sqrt{2 \pi}} d x \right) - 1\\
  & = & \frac{1}{\sqrt{2 \pi}  (1 - \varepsilon)^2} \left( \int_{- \infty}^b
  \exp (- (x - \alpha)^2 + x^2 / 2) d x \right) - 1\\
  & = & \frac{1}{\sqrt{2 \pi}  (1 - \varepsilon)^2} \left( \int_{- \infty}^b
  \exp \left( - \frac{x^2}{2} + 2 x \alpha - \alpha^2 \right) d x \right) -
  1\\
  & = & \frac{1}{\sqrt{2 \pi}  (1 - \varepsilon)^2} \left( \int_{- \infty}^b
  \exp \left( - \left( \frac{x}{\sqrt{2}} \right)^2 + 2 x \alpha - \left(
  \sqrt{2} \alpha \right)^2 + \alpha^2 \right) d x \right) - 1\\
  & = & \frac{\exp (\alpha^2)}{\sqrt{2 \pi}  (1 - \varepsilon)^2} \left(
  \int_{- \infty}^b \exp \left( - \left( \frac{x}{\sqrt{2}} - \sqrt{2} \alpha
  \right)^2 \right) d x \right) - 1\\
  & = & \frac{\exp (\alpha^2)}{\sqrt{2 \pi}  (1 - \varepsilon)^2} \left(
  \int_{- \infty}^b \exp \left( - \frac{(x - 2 \alpha)^2}{2} \right) d x
  \right) - 1\\
  & = & \frac{\mathcal{N} (2 \alpha, 1 , S)}{(1 - \eps)^2}
  \cdot \exp (\alpha^2) - 1\\
  & \leqslant & \frac{\exp (\alpha^2)}{(1 - \varepsilon)^2} - 1\\
  & \leqslant & (1 + O (\varepsilon)) \cdot (1 + O (\alpha^2)) - 1\\
  & \leqslant & O (\varepsilon) + O (\alpha^2) .
\end{eqnarray*}
We now apply \cref{prop:lbtesting} \cite[Proposition 7.1]{DBLP:journals/corr/DiakonikolasKS16c}, and obtain a lower bound of
\[ 
    \Omega \left( \frac{d}{\eps + \alpha^2} \right) 
    =
    \Omega \left(\frac{d}{\eps} \right). 
\]
concluding the proof.
\end{proof}

\vfill

\end{document}